\newtheorem{df}{Definition}[section]
\newtheorem{thm}[df]{Theorem}
\newtheorem*{ass}{Assumption}
\newtheorem{lem}[df]{Lemma}
\newtheorem{prop}[df]{Proposition}
\newtheorem{rmrk}[df]{Remark} 
\begin{document}
\title[Additive Average Schwarz Method for CRFVE Discretization]{Additive average Schwarz method for a Crouzeix-Raviart Finite Volume Element Discretization of Elliptic Problems with Heterogeneous Coefficients}
\author{Atle Loneland}\address{Department of Informatics, University of Bergen, 5020 Bergen, Norway}
\email{Atle.Loneland@ii.uib.no}
\author{Leszek Marcinkowski}\address{Faculty of Mathematics, University of Warsaw, Banacha 2, 02-097 Warszawa, Poland.} \thanks{L. Marcinkowski was partially supported by the Polish Scientific Grant 2011/01/B/ST1/01179.}
\email{Leszek.Marcinkowski@mimuw.edu.pl}
\author{Talal Rahman}\address{Department of Computing, Mathematics and Physics, Bergen University College, 5020 Bergen, Norway}
\email{Talal.Rahman@hib.no}
\keywords{domain decomposition, Crouzeix-Raviart element, additive Schwarz method, finite volume element, GMRES}
\maketitle

\begin{abstract}
In this paper we introduce an additive Schwarz method for a Crouzeix-Raviart Finite Volume Element (CRFVE) discretization of a second order elliptic problem with discontinuous coefficients, where the discontinuities are both inside the subdomains and across and along the subdomain boundaries. We show that, depending on the distribution of the coefficient in the model problem, the parameters describing the GMRES convergence rate of the proposed method depend linearly or quadratically on the mesh parameters $H/h$.  Also, under certain restrictions on the distribution of the coefficient, the convergence of the GMRES method is independent of jumps in the coefficient.
\end{abstract}

\section{Introduction}
In this paper we introduce an additive Schwarz method for a second order elliptic problem with discontinuous coefficients inside the subdomains and across and along the subdomain boundaries. Problems of this type play a crucial part in the field of scientific computation For example, simulation of fluid flow in porous media are often affected by discontinuities in the permeability of the porous media. Discontinuities or jumps in the coefficient cause the performance of standard iterative methods to deteriorate as the discontinuities or the jumps increase.

The finite volume (FV) method is one of the most versatile discretization techniques used in computational fluid dynamics. It is widely used for the approximation of conservation laws, nonlinear problems and in convection-diffusion problems. The finite volume divides the domain into control volumes where the nodes from the finite difference or the finite element discretization are located in the control volume. Unlike the finite difference and the finite element method, the solution to the finite volume method satisfies conservation of certain quantities such as mass, momentum, energy and species. This property is exactly satisfied for every control volume in the domain and also for the whole computational domain. An attractive feature of this method is that it is directly connected to the physics of the system. There are two types of finite volume methods: One which is based on the finite difference discretization, called the finite volume method and one which is based on the finite element discretization named the finite volume element (FVE) method. In the later the approximation of the solution is sought in a finite element space and can therefore be considered as a Petrov-Galerkin finite element method.

Due to the popularity of the finite volume element method in science and engineering, many results on the analysis of the FVE method have been published, cf. \cite{LIN:2013:FVEM,ewing2002accuracy,rui2008convergence,chatzipantelidis1999finite,Chatzipantelidis:2002:FVE} and many more. In \cite{BANK:1987:BOX}, the authors proved that for the Poisson equation on a polygonal domain in two dimension, the stiffness matrix of the FVE method is equal to the stiffness matrix of the FE method for very general grids. In \cite{HACKBUSCH:1989:FIRST}, the authors proved that for the general elliptic case for polygonal domains in two dimensions, the error between the FE solution and the FVE solution is of first order in the general case and
of second order for some special FVE schemes. Thus, some superconvergence results valid for the finite element method is also valid for the finite volume element method, cf. \cite{CAI:1991:ON,WU:2003:ERROR}. Finite volume element methods based on the lowest order nonconforming Crouzeix-Raviart elements have been studied in \cite{chatzipantelidis1999finite}, where the author proves optimal order error estimates in the $L^2$-norm and a mesh dependent $H^1$-norm for the FVE solution of elliptic problems. Later, the authors in \cite{ewing2002accuracy} showed that the accuracy of the FVE method for linear conforming elements can be affected by the regularities of the exact solution and the source term. They also developed an error estimation framework for the FVE method which treats the FVE method as a perturbation of the Galerkin finite element method. For an overview over recent developments of FVE methods, cf. \cite{LIN:2013:FVEM} and references therein. 

Additive Schwarz Methods (ASM) for solving elliptic problems discretized by the finite element method have been studied thoroughly, cf. \cite{smith1996domain,toselli2005domain}, but ASMs for conforming finite volume element (FVE) discretization have only been consider in \cite{chou2003domain,Zhang:2006:ODD}. For the CR finite element discretization, there exist several results for second order elliptic problems; cf. \cite{sarkis1997nonstandard,rahman2005additive,Brenner:1996:TLS,marcinkowski1999mortar}, but for the CRFVE discretization, no ASMs have been studied.

In recent years, many results regarding ASMs for problems with discontinuities coefficient, both across and along subdomain boundaries, have been studied. In \cite{Graham:2007:MULTDD}, the authors proposed a two level additive Schwarz method where the coarse space is based on the multiscale finite element functions introduced in \cite{Hou:1997:MultFEM}. Later, several authors have proposed two level additive Schwarz methods with coarse spaces based on spectral basis functions constructed from solving different types of generalized eigenvalue problems, cf. \cite{Galvis:2010:DDMULT1,Galvis:2010:DDMULT2,Dolean:2012:DirNeu,Spillane:2014:GENEO} and many more. These method are all overlapping methods based on the conforming finite element discretization with exotic coarse spaces where the coarse basis functions are discrete harmonic functions or spectral basis functions.

The ASM we consider in this paper differs from methods mentioned above in the sense that it is a non-overlapping method and the discretization is done using nonconforming finite volume elements. Also, the average coarse space employed in our method is based on approximate discrete harmonic functions. Therefore, the average coarse space is a computationally cheap approximation to the full discrete harmonic function space. Also, the method does not require a coarse grid triangulation, i.e. we are free to use arbitrary irregular subdomains.

The variant of the additive Schwarz method we consider in this paper was first introduced for conforming P1 elements in \cite{BJORSTAD:1996:AVG} and later formulated for a mortar method with the  Crouzeix-Raviart elements in \cite{rahman2005additive}. In \cite{dryja2010additive} the authors analyzed the method for a discontinuous Galerkin discretization. In this paper we consider the same additive Schwarz method for the Crouzeix-Raviart FVE method introduced in \cite{chatzipantelidis1999finite} and show that the method depends linearly or quadratically on the mesh parameters $H/h$, i.e., depending on the distribution of the coefficient in the model problem, the parameters describing the convergence of the GMRES method used to solve the preconditioned system depends linearly or quadratically on the mesh parameters. Under certain restrictions on the distribution of the coefficient, the convergence of the GMRES method is independent of jumps in the coefficient. Also, using the framework developed in \cite{ewing2002accuracy}, we prove the $H^1$ error estimates using the same techniques as in \cite{ewing2002accuracy,rui2008convergence}. This estimate is of optimal order if the exact solution of the elliptic problem under consideration is of  the  $H^2$ regularity. Last, we show both theoretically and numerically that, in general, for varying coefficients the finite volume element bilinear form, and hence the resulting finite volume element stiffness matrix, is non-symmetric.

The rest of this paper is organized as follows. In Section~\ref{sect:prelim} we define the differential problem and the discrete problem, both for the nonconforming finite element and the nonconforming finite volume element discretization. In Section~\ref{sect:gmres} we introduce the GMRES method for the preconditioned system and the corresponding parameters describing the convergence rate. In Section~\ref{sect:asm} we introduce the additive Schwarz methods and give a detailed convergence analysis of the GMRES convergence rate. In Section~\ref{sect:numres} we show some numerical results which confirms the theory developed in the previous sections.
\section{Prelimenaries}
\label{sect:prelim}
\subsection{The Model Problem}
We consider the following elliptic boundary value problem
\begin{eqnarray}
\label{eq:modelproblem}
-\nabla\cdot(\alpha(x)\nabla u)&=&f \hspace{15 mm} \mathrm{ in }\; \Omega,\\ \nonumber
u&=&0 \hspace{15 mm} \mathrm{on}\; \partial\Omega.
\end{eqnarray}
Where $\Omega$ is a bounded convex domain in $\mathbb{R}^2$ and $f\in L^2(\Omega)$. 

The corresponding standard variational (weak) formulation is: Find $u \in H^1_0(\Omega)$  such that
\begin{equation}
\label{eq:weakformulation}
  a(u,v)=\int_\Omega f v \:dx \quad \forall  v \in H^1_0(\Omega),
\end{equation}
where 
$$a(u,v)= \int_{\Omega} \alpha(x)\nabla u\cdot\nabla v \: dx.$$

The coefficient $\alpha(x)$ has the property $\alpha\in W^{1,\infty}(D_j)$ with respect to a nonoverlapping partitioning of $\Omega$ into open, connected Lipschitz polytopes $\mathcal{D}:=\{D_j:j=1,\ldots,n\}$, that is, 
\begin{equation*}
 \bar\Omega=\bigcup_{j=1}^n\bar D_j.
\end{equation*}
We require that $|\alpha|_{1,\infty,D_j}\leq C$ for $j=1,\ldots,n$ and that $\alpha\geq \alpha_0$ for some positive constant $\alpha_0$. For simplicity of presentation we also require that $\alpha_0\geq 1$. This last property can always be achieved by scaling  of (\ref{eq:modelproblem}).

\subsection{Basic notation}
Throughout this paper we will use standard notations for the Sobolev spaces. We denote the space of functions that have weak derivatives of order $s$ in the space $L^{2}(\Omega)$, as $H^s(\Omega)$. The norm on the space $H^s(\Omega)$ is defined by $$\|u\|_{s,\Omega}=\|u\|_{s}=\left(\int_\Omega\sum_{|\alpha|\leq s}|D^\alpha u|^2\,dx\right)^{1/2}.$$ The space of functions with bounded weak derivatives of order $s$ is denoted by $W^{s,\infty}(\Omega)$ with the corresponding norm defined as 
$$\|u\|_{s,\infty,\Omega}=\|u\|_{s,\infty}=\max_{0\leq |\alpha|\leq s}\|D^\alpha u\|_{2}.$$ The subspace of $H^1(\Omega)$ with functions vanishing on the boundary $\partial\Omega$ in the sense of traces, is denoted by $H^1_0(\Omega)$. The duality pairing between $H^{-1}(\Omega)$ and $H^1_0(\Omega)$, denote by $(f,u)$ is the action of a functional $f\in H^{-1}(\Omega)$ on a function $u\in H^1_0(\Omega)$.

Consider a triangulation $\mathcal{T}_h$ of $\Omega$, consisting of closed triangle elements $K$ such that $\bar{\Omega}=\bigcup_{K\in \mathcal{T}_h}K$. Let $h_K$ be the diameter of $K$ and let $h=\max_{K\in\mathcal{T}_h} h_K$ be the largest diameter of the triangles $K\in\mathcal{T}_h$. 

We assume that the triangulation is defined in such way that $\partial K$'s are aligned with $\partial D_j$'s. This implies that the coefficient $\alpha(x)$ has the property that $\alpha\in W^{1,\infty}(K)$ for all $K\in\mathcal{T}_h$. In addition, we also require the triangulation $\mathcal{T}_h(\Omega)$ to be quasiuniform \cite{brenner2008mathematical}.

We define the broken $H^1(\Omega)$-norm and $H^1(\Omega)$-seminorm respectively as
\begin{eqnarray*}
 \|v\|_{s,h,\Omega}=\left(\sum_{K\in\mathcal{T}_h}\|v\|^2_{s,K}\right)^{1/2}\text{ and }\quad |v|_{s,h,\Omega}=\left(\sum_{K\in\mathcal{T}_h}|v|^2_{s,K}\right)^{1/2}.
\end{eqnarray*}
We also introduce the energy seminorm $$\|u\|_{a,G}^2=\int_G \alpha(x)|\nabla u|^2\:dx$$
for any $G\subset \Omega$ and let $\|u\|_a=\|u\|_{a,\Omega}$. 

Let $\mathrm{E}_h(K)$ be the set of edges of $K\in\mathcal{T}_h$ and $\mathrm{E}_h=\cup_{K\in\mathcal{T}_h}\mathrm{E}_h(K)$, i.e. the union of all edges in the triangulation $\mathcal{T}_h$. Also, define $\mathrm{E}_h^\mathrm{in}$ as the set of interior edges of the triangulation $\mathcal{T}_h$, i.e. $e\in \mathrm{E}_h^\mathrm{in}$ if and only if $e\in \mathrm{E}_h$ and $e\not\subset \partial\Omega$. For every edge $e\in \mathrm{E}_h^\mathrm{in}$ we identify a region $V_e$ as the union of the two triangles ${K^{+e}\text{ and }K^{-e}\in\mathcal{T}_h}$ sharing $e$ as their common edge. Associated with this region, let $\mathcal{T}_h(V_e)$ be the set of the triangles of $V_e$ and $m_e$ the middle point of the edge $e\in E_h$ (cf. Figure \ref{fig:cv}).

Based on this triangulation $\mathcal{T}_h$, we introduce a dual mesh $\mathcal{T}_h^*$ consisting of elements called the control volumes. There are several ways to construct the dual mesh. We choose here to construct the dual mesh in the following way. Let $z_k$ be an interior point of $K\in\mathcal{T}_h$, we connect it with straight lines to the vertices of $K$ such that $K$ is partitioned into three subtriangles, $K_e$ for each edge $e\in\mathrm{E}_h(K)$. Denote this new finer triangulation of $\Omega$ by $\widetilde{\mathcal{T}_h}$ and let, for every $K\in\mathcal{T}_h$, $\widetilde{\mathcal{T}_h}(K)=\{\tilde{K}\in\widetilde{\mathcal{T}_h}:\tilde{K}\text{ subtriangle of }K\}$ be the set of subtriangles of $K$. 

We now associate with each edge $e\in \mathrm{E}_h^\mathrm{in}$ a corresponding control volume $b_e$ consisting of the two subtriangles of $\widetilde{\mathcal{T}_h}$ which have $e$ as an common edge. Define  $\mathcal{B}_e=\{b_e:e\in\mathrm{E}_h^\mathrm{in}\}$ to be the set of all 
such control volumes, and let $n_e$ be the normal vector corresponding to the edge $e$ in $K^{+e}$ of the two triangles $K^{+e}$ and $K^{-e}$ sharing $e$. 

We assume that there exists another  nonoverlapping partitioning of $\Omega$ into open, connected Lipschitz polytopes $\Omega_i$ such that 
$
   \overline{\Omega}=\bigcup_{i=1}^N \overline{\Omega}_i\,.
$
We also assume that these subdomains form a coarse triangulation of the domain which is shape regular as in \cite{Brenner:1999:BDD} and that the boundaries of elements in 
$T_h$ are aligned with the boundaries of any $\Omega_j$.

For notational convenience, we denote the CR nodal points, i.e. the midpoints of edges $e\in\mathrm{E}_h$, belonging to $\Omega, \Omega_i,\partial\Omega\text{ and } \partial\Omega_i$ by $\Omega^\mathrm{CR}_{h}, \Omega^\mathrm{CR}_{ih},\partial\Omega^\mathrm{CR}_{h}\text{ and } \partial\Omega^\mathrm{CR}_{ih}$, respectively. Correspondingly, the set of P1 conforming nodal points, i.e., vertices of elements in $\mathcal{T}_h(\Omega)$ are denoted by $\Omega_{h}, \Omega_{ih},\partial\Omega_{h}\text{ and } \partial\Omega_{ih}$, respectively. To simplify the presentation, we let $C$ be a generic positive constant independent of the mesh sizes $h$ and $H$, and of the functions under consideration. $C$ may be different at different occurrences.
\begin{figure}

\centering
\centering
\includegraphics[width=0.4\textwidth]{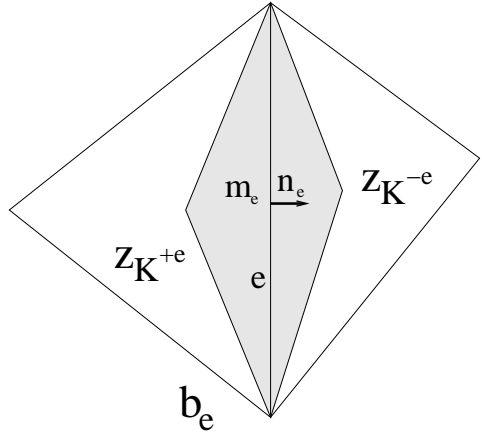}
 \caption{The control volume $b_e$ for an edge $e$ which is the common edge to the triangles $K^{+e}$ and $K^{-e}$. Here $m_e$ is the midpoint of $e$, $n_e$ normal unit vector to  $e$, $z_{K^{+e}}$ and $z_{K^{-e}}$ are the interior points of the  the triangles $K^{+e}$ and $K^{-e}$ which share the edge $e$.}
\label{fig:cv}
%

\end{figure}
\subsection{The CRFVE method} Let $V_h$ be the nonconforming CR finite element space defined on the triangulation $\mathcal{T}_h$, 
$$ V_h=V_h(\Omega):=\{v\in L^2(\Omega): v_{|K}\in P_1, \quad  K \in T_h \quad v(m)=0\quad  m \in \partial\Omega_h^{CR}\},$$
and let $V_h^*$ be its dual control volume space
$$V_h^*=V_h^*(\Omega):=\{v\in L^2(\Omega): v_{|b_e}\in P_0,\quad  b_e \in \mathcal{T}_h^* \quad v(m)=0\quad m \in \partial\Omega_h^{CR}\} .$$
Obviously, $V_h=\text{span}\{\phi_e(x):e\in\mathrm{E}_h\}$ and $V_h^*=\text{span}\{\chi_e(x):e\in\mathrm{E}_h\}$, where $\{\phi_e\}$ are the standard nonconforming nodal basis functions and $\{\chi_e\}$ are the characteristic functions of the control volume $\{b_e\}$. Now, we introduce two interpolation operators, $I_h$ and $I_h^*$, defined for any function with properly defined and unique values at each midpoint $m \in  \Omega_h^{CR}$:, i.e.
$$I_hu=\sum_{e\in \mathrm{E}_h^{in}}u(m_e)\phi_e\quad \text{ and }\quad I_h^*u=\sum_{e\in \mathrm{E}_h^{in}}u(m_e)\chi_e.$$

We may then define the CRFVE approximation $u_h^{FV}$ of (\ref{eq:modelproblem}) as the solution to the following problem: Find $u_h^{FV}\in V_h$ such that 
\begin{equation}
\label{eq:dcrprb1}
 a_h^{FV}(u_h^{FV},I_h^*v)=\left(f,I_h^*v\right),\qquad v\in V_h
\end{equation}
or equivalently
\begin{equation}
\label{eq:dcrprb12}
  a_h^{FV}(u_h^{FV},v)=\left(f,v\right),\qquad v\in V_h^*,
\end{equation}
where the bilinear form is defined as
\begin{eqnarray}
 a_h^{FV}(u,v)=-\sum_{e\in \mathrm{E}_h^{in}}v(m_e)\int_{\partial b_e}\alpha(s)\nabla u\cdot\mathbf{n}\;ds\qquad u\in V_h, v\in V_h^*.
\end{eqnarray}
The corresponding nonconforming finite element problem is defined as: Find $u_h^{FE}\in V_h$ such that
\begin{equation}
\label{eq:dcrprb2}
 a_h^{FE}(u_h^{FE},v)=\left(f,v\right),\qquad v\in V_h,
\end{equation}
where the CRFE bilinear form $a(\cdot,\cdot)$ is 
\begin{eqnarray}
 a_h^{FE}(u,v)=\sum_{K\in T_h}\int_K\alpha(x)\nabla u\cdot\nabla v\;dx,\qquad u,v\in V_h.
\end{eqnarray}
From the last bilinear form above we define a corresponding energy norm induced by $a_h^{FE}(\cdot,\cdot)$ as $\|\cdot\|_a=\sqrt{a_h^{FE}(\cdot,\cdot)}$.


Now we state a lemma which is needed to prove the relationship between the CRFVE- and CRFE-bilinear forms for piecewise constant coefficients $\alpha(x)$. 
\begin{lem} Let $\alpha$ be piecewise constant over each element, i.e., $\alpha_K=\alpha(x)|_{K}$ is constant for each $K\in\mathcal{T}_h(\Omega)$, $e\in E_h^{\text{in}}\cap E_h(K)$ and $v\in V_h$. Then 
\begin{equation}
\label{eq:piececonst}
 \int_{b_e}\alpha(s)\frac{\partial u}{\partial n}\;ds=-\int_e\left[\frac{\partial u}{\partial n_e}\right]_\alpha\;ds.
\end{equation}
where $\left[\frac{\partial u}{\partial n_e}\right]_\alpha=\alpha_{K^{+e}}\frac{\partial u}{\partial n_e}-\alpha_{K^{-e}}\frac{\partial u}{\partial n_e}$ and $n_e$ is the normal vector of $K$ to $e$.
 
\end{lem}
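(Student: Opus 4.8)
The plan is to exploit two facts that follow immediately from the piecewise-constant hypothesis: on each triangle $K$ the coefficient $\alpha_K$ is constant and, since $u\in V_h$ is affine on $K$, the gradient $\nabla u|_K$ is a constant vector. Hence on each triangle the flux field $\mathbf{F}_K:=\alpha_K\nabla u|_K$ is a constant, and in particular divergence-free, vector field. This reduces the identity to elementary control-volume geometry together with the divergence theorem. (I interpret the left-hand side as the flux over the control-volume boundary $\partial b_e$, as in the definition of $a_h^{FV}$.)

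First I would recall the geometry of $b_e$. It is the union of the two subtriangles $K_e^{+}$ and $K_e^{-}$ obtained by joining the interior points $z_{K^{+e}}$ and $z_{K^{-e}}$ to the two endpoints $P_1,P_2$ of $e$. Thus $b_e$ is a quadrilateral whose diagonal is exactly the edge $e$, so $\partial b_e$ consists of the four outer segments $P_1 z_{K^{+e}}$, $z_{K^{+e}} P_2$, $P_2 z_{K^{-e}}$, $z_{K^{-e}} P_1$, while $e$ itself lies in the interior of $b_e$. The boundary of the subtriangle $K_e^{+}$ is the two segments on the $+$ side together with $e$, and analogously for $K_e^{-}$.

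Next I would apply the divergence theorem to $\mathbf{F}_{K^{+e}}$ on the closed subtriangle $K_e^{+}$. Since $\nabla\cdot\mathbf{F}_{K^{+e}}=0$, the total outward flux vanishes, so the flux through the two outer segments equals minus the flux through $e$. The orientation check is the crux: the outward normal of $K_e^{+}$ on the two outer segments coincides with the outward normal of $b_e$ there, while its outward normal on $e$ points away from $z_{K^{+e}}$, i.e.\ from $K^{+e}$ toward $K^{-e}$, which is precisely $n_e$. This gives $\int_{(\partial b_e)\cap K^{+e}}\alpha\frac{\partial u}{\partial n}\,ds=-\alpha_{K^{+e}}\int_e\frac{\partial u}{\partial n_e}\,ds$ with the normal derivative taken from the $+$ side. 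Repeating the argument on $K_e^{-}$, where the outward normal on $e$ is now $-n_e$, yields $\int_{(\partial b_e)\cap K^{-e}}\alpha\frac{\partial u}{\partial n}\,ds=+\alpha_{K^{-e}}\int_e\frac{\partial u}{\partial n_e}\,ds$ from the $-$ side.

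Finally I would add the two contributions. Because $\nabla u|_{K^{\pm e}}$ is constant, the normal derivatives are constant along $e$ and the terms combine into $-\int_e\bigl(\alpha_{K^{+e}}\frac{\partial u}{\partial n_e}-\alpha_{K^{-e}}\frac{\partial u}{\partial n_e}\bigr)\,ds=-\int_e\bigl[\frac{\partial u}{\partial n_e}\bigr]_\alpha\,ds$, which is the claimed identity. The only genuine obstacle is the bookkeeping of normal orientations: one must verify that the outward normal of the control volume on each outer segment matches the outward normal of the corresponding subtriangle, and that the induced orientation on the diagonal $e$ produces $+n_e$ from the $K^{+e}$ side and $-n_e$ from the $K^{-e}$ side, so that the two boundary fluxes assemble into the weighted jump rather than cancelling. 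Everything else is just the divergence theorem applied to a constant, and hence solenoidal, field.
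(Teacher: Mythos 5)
Your proposal is correct and follows essentially the same route as the paper's own proof: both split $\partial b_e$ into the two subtriangles $b_e\cap K^{\pm e}$, complete each piece to a closed boundary by accounting for the flux through $e$ (which assembles into the weighted jump $\left[\frac{\partial u}{\partial n_e}\right]_\alpha$), and kill the closed-boundary integrals via the divergence theorem, using that $\alpha_K\nabla u$ is constant (equivalently $\Delta u=0$) on each subtriangle. Your version is slightly more explicit about the normal orientations, and you correctly read the left-hand side as an integral over $\partial b_e$, but the argument is the same.
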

\begin{proof}
 Let $v\in V_h$, $K\in \mathcal{T}_h(\Omega)$, $e\in E_h^{\text{in}}\cap E_h(K)$ and $n_e$ external normal vector of $K$ to $e$. Then we have
 \begin{equation*}
  \int_{\partial b_e}\alpha(s)\frac{\partial v}{\partial n}\;ds=\int_{\partial (b_e\cap K^{+e})}\alpha_{K^{+e}}\frac{\partial v}{\partial n}\;ds+\int_{\partial (b_e\cap K^{-e})}\alpha_{K^{-e}}\frac{\partial v}{\partial n}\;ds-\int_e\left[\frac{\partial u}{\partial n_e}\right]_\alpha\;ds.
 \end{equation*}
Using Green's formula and the fact that $\Delta v=0$ over $b_e\cap K^{+e}$ and $b_e\cap K^{-e}$ for any $e\in E_h^{\text{in}}$ we have
$$\int_{\partial(b_e\cap K^{+e})}\frac{\partial v}{\partial n}\;ds =\int_{b_e\cap K^{+e}}\triangle v\;ds=0,$$
and analogously for $\partial(b_e\cap K^{-e})$. From this we obtain (\ref{eq:piececonst}).
\end{proof}

The next lemma is a classical result:
\begin{lem} 
\label{lem:nrmeq}
There exists a constant C independent of h such that
\begin{equation*}
C^{-1}|v|_{1,h}^2\leq\sum_{K\in\mathcal{T}_h(\Omega)}\sum_{e,l\in \mathrm{E}_h(K)}(v(m_e)-v(m_l))^2\leq C|v|_{1,h}^2,\qquad \forall v\in V_h.
\end{equation*}

\end{lem}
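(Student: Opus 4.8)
The plan is to establish the claimed equivalence locally on each triangle and then sum over all elements. Writing $S_K(v):=\sum_{e,l\in\mathrm{E}_h(K)}(v(m_e)-v(m_l))^2$, the middle quantity is exactly $\sum_{K\in\mathcal{T}_h(\Omega)}S_K(v)$, while by the definition of the broken seminorm $|v|_{1,h}^2=\sum_{K}|v|_{1,K}^2$. Hence it suffices to produce a constant $C$, independent of $h$ and of the particular element $K$, such that $C^{-1}|v|_{1,K}^2\le S_K(v)\le C|v|_{1,K}^2$ for every $v\in V_h$; summing these local inequalities over $K$ then gives the global statement. Both $S_K(v)$ and $|v|_{1,K}^2$ depend on $v$ only through $v|_K\in P_1$, so the natural tool is a transformation to a fixed reference triangle $\widehat K$ followed by an equivalence-of-norms argument on the finite-dimensional space $P_1(\widehat K)$.

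The heart of the matter is the reference-element estimate. On $\widehat K$ both $\widehat S(\widehat v):=\sum_{e,l}(\widehat v(\widehat m_e)-\widehat v(\widehat m_l))^2$ and $|\widehat v|_{1,\widehat K}^2$ are quadratic forms on $P_1(\widehat K)$ whose common kernel is exactly the constant functions. Indeed, $|\widehat v|_{1,\widehat K}=0$ if and only if $\nabla\widehat v=0$, i.e. $\widehat v$ is constant; and $\widehat S(\widehat v)=0$ if and only if $\widehat v$ takes the same value at all three edge midpoints, which for a $P_1$ function forces $\widehat v$ to be constant, precisely because a $P_1$ function is uniquely determined by its three midpoint values (unisolvence of the Crouzeix--Raviart degrees of freedom). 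Both forms are therefore positive definite on the two-dimensional quotient $P_1(\widehat K)/\mathbb{R}$, and since any two positive definite quadratic forms on a finite-dimensional space are equivalent, there is a constant $\widehat C$ with $\widehat C^{-1}|\widehat v|_{1,\widehat K}^2\le \widehat S(\widehat v)\le \widehat C|\widehat v|_{1,\widehat K}^2$ for all $\widehat v\in P_1(\widehat K)$.

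To transfer this to a general $K$ I would use the affine map $F_K(\widehat x)=B_K\widehat x+b_K$ with $F_K(\widehat K)=K$ and set $\widehat v=v\circ F_K$. Since affine maps send edge midpoints to edge midpoints, point values are preserved and $S_K(v)=\widehat S(\widehat v)$ exactly. For the seminorm the change of variables gives $\nabla v=B_K^{-\top}\widehat{\nabla}\widehat v$ and $dx=|\det B_K|\,d\widehat x$; because we work in two dimensions the Jacobian factor $|\det B_K|\sim h_K^2$ compensates the factor $\|B_K^{-1}\|^2\sim h_K^{-2}$ coming from the transformed gradient, and the shape regularity (indeed quasiuniformity) of $\mathcal{T}_h$ bounds the remaining geometric ratios from above and below. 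This yields $c^{-1}|\widehat v|_{1,\widehat K}^2\le|v|_{1,K}^2\le c|\widehat v|_{1,\widehat K}^2$ with $c$ depending only on the shape-regularity constant. Combining this scale invariance with the reference estimate gives the per-element bound with a constant independent of $h$ and $K$.

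The one genuinely delicate point is the reference-element step, namely verifying that $\widehat S$ is positive definite modulo constants, i.e. that equal midpoint values force a $P_1$ function to be constant; everything after that is a routine scaling and summation argument. I would also remark that the exact indexing of the double sum (ordered versus unordered pairs, inclusion of the vanishing diagonal terms with $e=l$) changes $S_K$ only by a fixed bounded factor and hence does not affect the equivalence.
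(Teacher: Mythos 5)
Your argument is correct and complete: the paper itself offers no proof of this lemma, labelling it simply ``a classical result,'' so there is nothing to compare against, but the route you take --- reduction to a per-element estimate, equivalence of the two quadratic forms on $P_1(\widehat K)/\mathbb{R}$ via unisolvence of the Crouzeix--Raviart midpoint degrees of freedom, and the observation that in two dimensions the $H^1$-seminorm is invariant under the affine scaling while the midpoint-difference sum is exactly preserved --- is the standard and correct way to establish it. Your closing remarks on the indexing of the double sum and on shape regularity supplying the uniform constant are exactly the points one would want made explicit.
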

The next lemma shows that if $\alpha$ is piecewise constant over fine elements then the CRFVE bilinear form is equal to the CRFE bilinear form, and in particular it is symmetric.
\begin{lem}
\label{lem:fveeqfe}
 Let $u,v\in V_h$, and let $\alpha_K$ be piecewise constant over each element $K\in\mathcal{T}_h(\Omega)$, then 
 \begin{equation}
  a_h^{FE}(u,v)=a_h^{FV}(u,I_h^*v).
 \end{equation}
\end{lem}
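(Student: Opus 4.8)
The plan is to show that, under the piecewise-constant hypothesis, both bilinear forms collapse to one and the same edge sum, namely
\begin{equation*}
a_h^{FE}(u,v)\;=\;\sum_{e\in\mathrm{E}_h^{\mathrm{in}}} v(m_e)\,|e|\,\left[\frac{\partial u}{\partial n_e}\right]_\alpha\;=\;a_h^{FV}(u,I_h^*v),
\end{equation*}
where $|e|$ denotes the length of $e$. The point that makes the two evaluations of the right-hand side agree is that for $u\in V_h$ the gradient $\nabla u$ is constant on each triangle, so both $\alpha_{K^{\pm e}}\,\partial u/\partial n_e$ and hence the weighted jump $[\partial u/\partial n_e]_\alpha$ are constant along each edge $e$; together with the fact that $v$ is single-valued at the midpoints $m_e$, this is what glues the element-wise computation together. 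Note also that $(I_h^*v)(m_e)=v(m_e)$ by construction of $I_h^*$, so $a_h^{FV}(u,I_h^*v)=-\sum_{e}v(m_e)\int_{\partial b_e}\alpha(s)\nabla u\cdot\mathbf n\,ds$.

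I would first treat the finite element side. On each $K\in\mathcal T_h$ I apply Green's formula to $\int_K\alpha_K\nabla u\cdot\nabla v\,dx$; since $u|_K\in P_1$ we have $\Delta u=0$ on $K$, so the volume term drops and only the boundary term $\int_{\partial K}\alpha_K(\partial u/\partial n_K)\,v\,ds$ survives, split as a sum over the edges $e\in\mathrm{E}_h(K)$ with $n_K$ the outward normal of $K$. On each such edge $\partial u/\partial n_K$ is constant and $v|_K$ is linear, so the midpoint rule is exact and $\int_e v\,ds=|e|\,v(m_e)$. Summing over all $K$ gives $a_h^{FE}(u,v)=\sum_K\sum_{e\in\mathrm{E}_h(K)}\alpha_K\,(\partial u/\partial n_K^e)\,|e|\,v(m_e)$, which I then reorganize as a sum over edges.

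In that reorganization, a boundary edge $e\subset\partial\Omega$ contributes nothing because $v(m_e)=0$ for $v\in V_h$, while an interior edge $e$ shared by $K^{+e}$ and $K^{-e}$ receives two contributions whose outward normals are $n_e$ and $-n_e$; using that $v(m_e)$ is the same from both sides (the Crouzeix--Raviart continuity at midpoints) these combine into $|e|\,v(m_e)\big(\alpha_{K^{+e}}\tfrac{\partial u}{\partial n_e}-\alpha_{K^{-e}}\tfrac{\partial u}{\partial n_e}\big)=|e|\,v(m_e)\,[\partial u/\partial n_e]_\alpha$, yielding the target edge sum. For the finite volume side I invoke the preceding lemma: relation (\ref{eq:piececonst}) gives $\int_{\partial b_e}\alpha(s)\nabla u\cdot\mathbf n\,ds=-\int_e[\partial u/\partial n_e]_\alpha\,ds=-|e|\,[\partial u/\partial n_e]_\alpha$, and substituting into $a_h^{FV}(u,I_h^*v)$ the two minus signs cancel, producing exactly the same edge sum. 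Comparing the two expressions finishes the proof. The main thing to get right is the sign and normal bookkeeping in the edge rearrangement, where single-valuedness of $v$ at $m_e$ and the constancy of $\nabla u$ per triangle are the essential ingredients; the analytic content is otherwise routine once (\ref{eq:piececonst}) is available.
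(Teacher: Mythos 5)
Your proof is correct and follows essentially the same route as the paper's: both arguments reduce $a_h^{FE}(u,v)$ via element-wise Green's formula (using $\Delta u=0$ on each $K$, exactness of the midpoint rule for linear traces, and Crouzeix--Raviart continuity at $m_e$) to the edge sum $\sum_{e} v(m_e)\int_e\left[\frac{\partial u}{\partial n_e}\right]_\alpha ds$, and then identify this with $a_h^{FV}(u,I_h^*v)$ through the preceding lemma's identity (\ref{eq:piececonst}). The only cosmetic difference is that the paper first expands $v$ in the nodal basis $\{\phi_e\}$ and computes each basis function's contribution, whereas you integrate by parts against $v$ directly and regroup the boundary terms by edges; the ingredients and the key cancellations are identical.
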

\begin{proof}
 We express $v$ as a linear combination of the basis elements of $V_h$, i.e. $v=\sum_{e\in \mathrm{E}_h^{in}} v(m_e)\phi_e.$ We may then write 
 \begin{eqnarray}
 \label{eq:1}
  a_h^{FE}(u,v)&=&\sum_{K\in\mathcal{T}_h}\alpha_K\int_K\nabla u\cdot\nabla v\;dx\nonumber \\
  &=&\sum_{e\in \mathrm{E}_h^{in}}v(m_e)\sum_{K\in\mathcal{T}_h(V_e)}\alpha_K\int_K\nabla u\cdot\nabla\phi_e\;dx
 \end{eqnarray}
 For each $e\in \mathrm{E}_h^{in}$ and $u\in V_h$, we have
 \begin{eqnarray}
 \label{eq:2}
 \sum_{K\in\mathcal{T}_h(V_e)}\alpha_K\int_K\nabla u\cdot\nabla \phi_e\;dx&=&\sum_{K\in\mathcal{T}_h(V_e)}\alpha_K\int_{\partial K}\frac{\partial u}{\partial n}\phi_e\;ds\nonumber\\
 &=&\alpha_{K^{+e}}\int_{\partial K^{+e}}\frac{\partial u}{\partial n}\phi_e\;ds+\alpha_{K^{-e}}\int_{\partial K^{-e}}\frac{\partial u}{\partial n}\phi_e\;ds\nonumber\\
 &=&\alpha_{K^{+e}}\int_{\partial K^{+e}\setminus e}\frac{\partial u}{\partial n}\phi_e\;ds+\alpha_{K^{-e}}\int_{\partial K^{-e}\setminus e}\frac{\partial u}{\partial n}\phi_e\;ds\nonumber\\
 &&+\alpha_{K^{+e}}\int_{e}\frac{\partial u}{\partial n_e}\phi_e\;ds-\alpha_{K^{-e}}\int_{e}\frac{\partial u}{\partial n_e}\phi_e\;ds\nonumber
 \end{eqnarray} 
Using the fact that $\phi_e$ is a linear polynomial and $\frac{\partial u}{\partial n}$ is constant on every side of $K\in\mathcal{T}_h(V_e)$  we get
 \begin{equation}
 \sum_{K\in\mathcal{T}_h(V_e)}\alpha_K\int_K\nabla u\cdot\nabla \phi_e\;dx=\int_e\left[\frac{\partial u}{\partial n_e}\right]_\alpha\;ds, 
 \end{equation}
 Combining (\ref{eq:1}) and (\ref{eq:2}) we obtain 
 
 \begin{eqnarray}
  \label{eq:3}
  a_h^{FE}(u,v)&=&\sum_{e\in \mathrm{E}_h^{in}}v(m_e)\int_e\left[\frac{\partial u}{\partial n_e}\right]_\alpha\;ds\nonumber\\
  &=&-\sum_{e\in \mathrm{E}_h^{in}}v(m_e)\int_{b_e}\alpha(s)\frac{\partial u}{\partial n}\;ds=a_h^{FV}(u,I_h^*v).
 \end{eqnarray}
which completes the proof.
\end{proof}
 
For varying coefficients in general, the FVE bilinear form is non-symmetric. This is easily seen by looking at $a_h^{FV}(\phi_i,I_h^*\phi_j)$ and $a_h^{FV}(\phi_j,I_h^*\phi_i)$. We state this as a remark.

\begin{rmrk}
For varying coefficients in (\ref{eq:modelproblem}), i.e. for a coefficient $\alpha$ which are not piecewise constant over each element, the FVE bilinear form is non-symmetric and hence in general we have 
$$a_h^{FV}(\phi_i,I_h^*\phi_j)\neq a_h^{FV}(\phi_j,I_h^*\phi_i),$$
for two nodal basis functions $\phi_j,\phi_i\in V_h(\Omega)$.
\end{rmrk}
\begin{proof}
Let $i,j,l$ be the three indices for the edges of a triangle $K\in\mathcal{T}_h$, then we have for $a_h^{FV}(\phi_i,I_h^*\phi_j)$
\begin{eqnarray}
\label{eq:nomsymmetry1}
a_h^{FV}(\phi_i,I_h^*\phi_j)&=&-\int_{\partial b_j}\alpha(s)\nabla \phi_i\cdot\mathbf{n}\;ds=-\int_{\partial( b_j\cap K)\cap\partial b_j}\alpha(s)\nabla \phi_i\cdot\mathbf{n}\;ds\nonumber\\
&=&-\nabla \phi_i\cdot\mathbf{n}_{jl}\int_{\partial(b_j\cap b_l)}\alpha(s)\;ds-\nabla \phi_i\cdot\mathbf{n}_{ji}\int_{\partial(b_j\cap b_i)}\alpha(s)\;ds
\end{eqnarray}
similarly for $a_h^{FV}(\phi_j,\phi_i)$ we have
\begin{eqnarray}
\label{eq:nomsymmetry2}
a_h^{FV}(\phi_j,I_h^*\phi_i)&=&-\int_{\partial b_i}\alpha(s)\nabla \phi_j\cdot\mathbf{n}\;ds=-\int_{\partial( b_i\cap K)\cap\partial b_i}\alpha(s)\nabla \phi_j\cdot\mathbf{n}\;ds\nonumber\\
&=&-\nabla \phi_j\cdot\mathbf{n}_{il}\int_{\partial(b_i\cap b_l)}\alpha(s)\;ds-\nabla \phi_j\cdot\mathbf{n}_{ij}\int_{\partial(b_i\cap b_j)}\alpha(s)\;ds
\end{eqnarray}
where $\mathbf{n}_{ij}$, $\mathbf{n}_{ji}$ ,$\mathbf{n}_{jl}$ and $\mathbf{n}_{il}$ are the corresponding normal vectors w.r.t. the edges of the control volumes $b_i$, $b_j$ and $b_l$ corresponding to the edges $e_i,e_j,e_l\in E_h(K)$. Comparing the terms of (\ref{eq:nomsymmetry1}) and (\ref{eq:nomsymmetry2}) we see that in the last term of each equation the integral is over the same edge, but in the first term the integral of the coefficient is over different edges. Since $\alpha$ may be arbitrarily different at those edges, the first terms of (\ref{eq:nomsymmetry1}) and (\ref{eq:nomsymmetry2}) will also be arbitrarily different at thus in general we will have that 
$$a_h^{FV}(\phi_i,I_h^*\phi_j)\neq a_h^{FV}(\phi_j,I_h^*\phi_i).$$
This completes the proof.
\end{proof}
 
The next lemma is crucial for the analysis of our method. It relates the CRFVE and CRFE bilinear forms.
\begin{lem} For the bilinear forms $a_h^{FE}(u,v)$ and $a_h^{FV}(u,v)$ the following estimates holds
\label{lem:conv}
 \begin{eqnarray}
 \label{eq:difffefve}
  |a_h^{FE}(u,v)-a_h^{FV}(u,I_h^*v)|\leq Ch\|u\|_a\|v\|_a,\qquad \forall u,v\in V_h.
 \end{eqnarray}
 and 
  \begin{equation}
 \label{eq:boundedness}
  a_h^{FV}(u,I_h^*u)\leq C_1\|u\|_{a}\|v\|_{a}
 \end{equation}
 \begin{equation}
 \label{eq:ellipticity}
  a_h^{FV}(u,I_h^*u)\geq C_0\|u\|_{a}^2
 \end{equation}
where $C,C_0,C_1$ are positive constants independent of $h$.

\end{lem}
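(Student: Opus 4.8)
The plan is to reduce everything to the first estimate (\ref{eq:difffefve}), since both the boundedness bound (\ref{eq:boundedness}) and the ellipticity bound (\ref{eq:ellipticity}) follow from it by a perturbation argument. The engine of the proof is Lemma \ref{lem:fveeqfe}, which identifies the two bilinear forms whenever the coefficient is constant on each triangle. Accordingly, I would freeze the coefficient elementwise: let $\bar\alpha$ denote the piecewise constant function with $\bar\alpha|_K=\frac{1}{|K|}\int_K\alpha\,dx$, and let $a_h^{FE,\bar\alpha}$ and $a_h^{FV,\bar\alpha}$ be the corresponding bilinear forms. Since $\alpha\in W^{1,\infty}(K)$ with $|\alpha|_{1,\infty,K}\le C$, a standard estimate gives $\|\alpha-\bar\alpha\|_{\infty,K}\le Ch_K|\alpha|_{1,\infty,K}\le Ch$. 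By Lemma \ref{lem:fveeqfe} applied to $\bar\alpha$ we have $a_h^{FE,\bar\alpha}(u,v)=a_h^{FV,\bar\alpha}(u,I_h^*v)$, so the left-hand side of (\ref{eq:difffefve}) splits as
$$a_h^{FE}(u,v)-a_h^{FV}(u,I_h^*v) = \big(a_h^{FE}-a_h^{FE,\bar\alpha}\big)(u,v) + \big(a_h^{FV,\bar\alpha}-a_h^{FV}\big)(u,I_h^*v),$$
and it suffices to bound each bracket by $Ch\|u\|_a\|v\|_a$.

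The finite element bracket is routine: it equals $\sum_K\int_K(\alpha-\bar\alpha)\nabla u\cdot\nabla v\,dx$, so by the elementwise bound on $\alpha-\bar\alpha$, the Cauchy--Schwarz inequality, and the fact that $\alpha_0\ge 1$ yields $\|\nabla w\|_{0,K}\le\|w\|_{a,K}$, it is controlled by $Ch\sum_K\|\nabla u\|_{0,K}\|\nabla v\|_{0,K}\le Ch\|u\|_a\|v\|_a$.

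The finite volume bracket is the main obstacle. Written out, it is $\sum_e v(m_e)\int_{\partial b_e}(\bar\alpha-\alpha)\nabla u\cdot\mathbf{n}\,ds$, and I would reorganize this sum over triangles $K$, grouping the portions of $\partial b_e$ lying inside each $K$. On such a portion $\nabla u|_K$ is a constant vector, so using $\|\bar\alpha-\alpha\|_{\infty,K}\le Ch$, the length bound $|\partial b_e\cap K|\le Ch$, and the scaling $|\nabla u|_K\sim h^{-1}\|\nabla u\|_{0,K}$, each flux integral over $\partial b_e\cap K$ is bounded by $Ch\|\nabla u\|_{0,K}$. The real difficulty is that the factors $v(m_e)$ cannot be bounded individually (the nodal values are not controlled by $\|v\|_a$), so I would exploit a conservation property: within a fixed $K$ the interior control-volume segments are traversed with opposite orientations by neighbouring control volumes, and the total flux of the piecewise linear $u$ over $\partial K$ vanishes by Green's formula; this lets me replace each $v(m_e)$ by $v(m_e)-c_K$ for an arbitrary constant $c_K$ (say the average of the three midpoint values on $K$). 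Choosing such a $c_K$ converts the nodal values into nodal differences $v(m_e)-v(m_l)$, which are exactly what Lemma \ref{lem:nrmeq} controls. A Cauchy--Schwarz over the triangles then gives
$$\Big|\big(a_h^{FV,\bar\alpha}-a_h^{FV}\big)(u,I_h^*v)\Big|\le Ch\Big(\sum_K\|\nabla u\|_{0,K}^2\Big)^{1/2}\Big(\sum_{K}\sum_{e,l\in \mathrm{E}_h(K)}(v(m_e)-v(m_l))^2\Big)^{1/2}\le Ch\|u\|_a\|v\|_a,$$
where the last step uses Lemma \ref{lem:nrmeq} together with $|v|_{1,h}\le\|v\|_a$ (again from $\alpha_0\ge1$). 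Combining the two brackets establishes (\ref{eq:difffefve}).

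Finally, (\ref{eq:boundedness}) and (\ref{eq:ellipticity}) are immediate corollaries of (\ref{eq:difffefve}). Cauchy--Schwarz for $a_h^{FE}$ gives $|a_h^{FE}(u,v)|\le\|u\|_a\|v\|_a$, so (\ref{eq:difffefve}) yields $|a_h^{FV}(u,I_h^*v)|\le(1+Ch)\|u\|_a\|v\|_a$, which is the boundedness bound (\ref{eq:boundedness}). Taking $v=u$ in (\ref{eq:difffefve}) and using $a_h^{FE}(u,u)=\|u\|_a^2$ gives $a_h^{FV}(u,I_h^*u)\ge(1-Ch)\|u\|_a^2$, and restricting to $h\le h_0$ with $Ch_0\le\tfrac12$ makes $1-Ch\ge C_0>0$, which is the ellipticity bound (\ref{eq:ellipticity}).
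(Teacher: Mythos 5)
Your proposal is correct and follows essentially the same route as the paper: freezing the coefficient at its elementwise average $\bar\alpha_K$, invoking Lemma \ref{lem:fveeqfe} to identify the two forms for the frozen coefficient, reducing the finite volume remainder to nodal differences $v(m_e)-v(m_l)$ controlled by Lemma \ref{lem:nrmeq}, and obtaining (\ref{eq:boundedness})--(\ref{eq:ellipticity}) as $O(h)$ perturbations of the finite element form. The only cosmetic difference is that the paper observes the finite element bracket vanishes exactly (since $\nabla u\cdot\nabla v$ is constant on $K$ and $\int_K(\alpha-\bar\alpha_K)\,dx=0$), whereas you bound it by $Ch\|u\|_a\|v\|_a$, which is equally sufficient.
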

\begin{proof} Similar results  can be found in \cite{rui2008convergence,chou2003domain} in the case of standard FVE method.
 For all $\alpha(x)\in W^{1,\infty}(K)$, define
 $$\overline{\alpha}_K=\frac{1}{|K|}\int_K\alpha(x)\;dx,\qquad K\in \mathcal{T}_h$$
 and for all $u,v\in V_h$ define
 $$\overline{a}(u,v)=\sum_{K\in T_h}\int_K\overline{\alpha}_K\nabla u\cdot\nabla v\;dx,$$ and
 $$\overline{a}_h(u,I_h^*v)=-\sum_{e\in \mathrm{E}_h^{in}}v(m_e)\int_{\partial b_e}\overline{\alpha}_K\nabla u\cdot\mathbf{n}\;ds.$$
 Since $\overline{\alpha}_K$ is piecewise constant we have from Lemma \ref{lem:fveeqfe}
 $$\overline{a}(u,v)=\overline{a}_h(u,I_h^*v),$$ which gives us
 \begin{eqnarray*}
  a_h^{FE}(u,v)-a_h^{FV}(u,I_h^*v)&=&[a_h^{FE}(u,v)-\overline{a}(u,v)]+[\overline{a}_h(u,I_h^*v)-a_h^{FV}(u,I_h^*v)]\\
  &=&\mathrm{I}+\mathrm{II}.
 \end{eqnarray*}
 Since $\nabla u$ and $\nabla v$ are constant over each element $K$, we have
 $$\mathrm{I}=0.$$
Write $\mathrm{II}$ as
$$  \mathrm{II}=\sum_{e\in \mathrm{E}_h^{in}}v(m_e)\int_{\partial b_e}(\alpha(s)-\overline{\alpha}_K)\nabla u\cdot\mathbf{n}\;ds$$
Define $\gamma_{el}=\partial b_e\cap\partial b_l$. The Cauchy-Schwarz inequality and Bramble-Hilbert give us
 \begin{eqnarray*}
  |\mathrm{II}| &=&\left|\sum_{K\in\mathcal{T}_h}\sum_{e,l\in \mathrm{E}_h(K)}(v(m_e)-v(m_l)\int_{\gamma_{el}}(\alpha(s)-\overline{\alpha}_K)\nabla u\cdot\mathbf{n}_{\gamma_{el}}\;ds\right|\\
   &\leq&\sum_{K\in\mathcal{T}_h}\sum_{e,l\in \mathrm{E}_h(K)}\|(\alpha(s)-\overline{\alpha}_K)\nabla u\|_{0,\infty,K} h_k|(v(m_e)-v(m_l)|\\
    &\leq&C\left(\sum_{K\in\mathcal{T}_h}\sum_{e,l\in \mathrm{E}_h(K)}\|(\alpha(s)-\overline{\alpha}_K)\nabla u\|_{0,\infty,K}^2 h_k^2\right)^{1/2}\left(\sum_{K\in\mathcal{T}_h}\sum_{e,l\in \mathrm{E}_h(K)}|v(m_e)-v(m_l)|^2\right)^{1/2}\\
    &\leq&C\left(\sum_{K\in\mathcal{T}_h}C^2 h_K^2|\alpha(s)|^2_{1,\infty,K}\|\nabla u\|_{0,K}^2\right)^{1/2}\left(\sum_{K\in\mathcal{T}_h}\sum_{e,l\in \mathrm{E}_h(K)}|v(m_e)-v(m_l)|^2\right)^{1/2}\\
    &\leq&Ch|u|_{1,h}|v|_{1,h}\leq Ch\|u\|_a\|v\|_a.
 \end{eqnarray*}
Above we have used the shape regular and quasi-uniform property of the triangulation and the fact that $\alpha\geq1$ and $|\alpha(x)|_{1,\infty,K}$ is uniformly bounded over $\Omega$. The estimates (\ref{eq:boundedness}) and (\ref{eq:ellipticity}) then follow directly from (\ref{eq:difffefve}), cf. \cite{Marcinkowski:2014:ASMFVE} for details.
\end{proof}
 
If we  define for $u,v\in V_h$
\begin{eqnarray}
\label{eq:symnonsymerr}
 a_h^{FV}(u,I_h^*v)=a_h^{FE}(u,v)+\mathrm{E}_h(u,v)
\end{eqnarray}
then, in the the proof of  Lemma~\ref{lem:conv}, we see that there exists a constant independent of $h$, such that
\begin{equation}
\label{eq:symminusnonerror}
\mathrm{E}_h(u,v)\leq Ch\|u\|_{1,h}\|v\|_{1,h}.
\end{equation}
For the CRFVE solution, $u_h^{FV}$, we also have
\begin{eqnarray}
\label{eq:symrhserr}
 a_h^{FE}(u_h^{FV},v)=(f,I_h^*v)-\mathrm{E}_h(u_h^{FV},v).
\end{eqnarray}
The lemma above and the resulting properties are crucial in the analysis of our additive Schwarz method. By applying them and using the framework developed in \cite{ewing2002accuracy} we are able to prove the  $H^1$ error estimates formulated in the following  theorem:
\begin{thm}
\label{thm:h1errorestimate}
 For an exact solution $u\in H^{1+\beta}(\Omega)$ of (\ref{eq:weakformulation}), with $1/2<\beta\leq1$, $f\in L^2(\Omega)$,  $\alpha(x)\in W^{1,\infty}(K)$ and for the CRFVE solution $u_h^{FV}$, we have
 \begin{eqnarray}
  \|u-u_h^{FV}\|_{1,h}\leq Ch^\beta(\|f\|_0+\|u\|_{1+\beta}),
 \end{eqnarray}
 where the constant $C=C(\alpha)$ is independent of $h$.
 \end{thm}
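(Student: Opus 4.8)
The plan is to split the total error by inserting the nonconforming finite element solution $u_h^{FE}$ of (\ref{eq:dcrprb2}) and applying the triangle inequality,
\[
\|u-u_h^{FV}\|_{1,h}\le \|u-u_h^{FE}\|_{1,h}+\|u_h^{FE}-u_h^{FV}\|_{1,h},
\]
estimating the two contributions separately. The first is the classical consistency/approximation error of the Crouzeix--Raviart nonconforming scheme, while the second measures the perturbation introduced by replacing the symmetric form $a_h^{FE}$ by the finite volume form $a_h^{FV}(\cdot,I_h^*\cdot)$, which is precisely what Lemma~\ref{lem:conv} and the error representation (\ref{eq:symnonsymerr})--(\ref{eq:symrhserr}) are designed to control.

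For the first term I would invoke the standard Crouzeix--Raviart estimate via Strang's second lemma. Writing the error as a best-approximation part plus a consistency part, the approximation part is bounded by $Ch^\beta\|u\|_{1+\beta}$ using the interpolation properties of $V_h$ for $u\in H^{1+\beta}(\Omega)$. The consistency part is a sum over interior edges of jump contributions of the form $\int_e \alpha\,\frac{\partial u}{\partial n}\,[v]\,ds$, estimated by the usual argument: the jump of $v\in V_h$ across an edge has vanishing edge mean, so a Bramble--Hilbert and trace argument again yields a bound of order $h^\beta\|u\|_{1+\beta}$. This step is where the hypothesis $\beta>1/2$ is essential, since it guarantees that the normal trace $\frac{\partial u}{\partial n}$ is well defined in $L^2(e)$ so that the consistency functional makes sense.

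For the second term, set $w_h=u_h^{FE}-u_h^{FV}\in V_h$. Subtracting the finite element equation (\ref{eq:dcrprb2}) from the representation (\ref{eq:symrhserr}) of the finite volume solution gives
\[
a_h^{FE}(w_h,v)=(f,\,v-I_h^*v)+\mathrm{E}_h(u_h^{FV},v),\qquad v\in V_h .
\]
Choosing $v=w_h$ and using $a_h^{FE}(w_h,w_h)=\|w_h\|_a^2$, I would bound the first right-hand term by the interpolation estimate $\|v-I_h^*v\|_0\le Ch\,|v|_{1,h}$ combined with $\|f\|_0$, and the second by (\ref{eq:symminusnonerror}); both are of the form $Ch\,(\cdots)\,\|w_h\|_a$. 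It remains to control $\|u_h^{FV}\|_a$: testing the finite volume problem against $I_h^*u_h^{FV}$, the ellipticity (\ref{eq:ellipticity}), a broken Poincar\'e--Friedrichs inequality, and the bound $\|I_h^*u_h^{FV}\|_0\le C\|u_h^{FV}\|_0$ together yield the stability estimate $\|u_h^{FV}\|_a\le C\|f\|_0$. Combining these gives $\|w_h\|_a^2\le Ch\|f\|_0\|w_h\|_a$, hence $\|u_h^{FE}-u_h^{FV}\|_{1,h}\le Ch\|f\|_0$.

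Finally, since $h\le h^\beta$ for $h\le 1$ and $\beta\le 1$, adding the two bounds yields $\|u-u_h^{FV}\|_{1,h}\le C h^\beta(\|f\|_0+\|u\|_{1+\beta})$, with $C=C(\alpha)$ entering through the equivalence of $\|\cdot\|_a$ and $|\cdot|_{1,h}$ (valid since $\alpha\ge 1$ and $|\alpha|_{1,\infty}$ is bounded) and the constants of Lemma~\ref{lem:conv}. The main obstacle is the consistency-error analysis of the nonconforming term in the first step, where the regularity threshold $\beta>1/2$ is needed; the finite-volume perturbation in the second step is comparatively routine once Lemma~\ref{lem:conv} is available, amounting essentially to bookkeeping with the already-established $O(h)$ bound on $\mathrm{E}_h$ and the stability of $u_h^{FV}$.
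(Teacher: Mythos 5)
Your proof is correct, but it pivots through a different intermediate quantity than the paper does. The paper inserts the CR interpolant $I_hu$ and bounds $\|u_h^{FV}-I_hu\|_{1,h}$ directly from the coercivity (\ref{eq:ellipticity}) of the finite volume form, expanding $a_h^{FV}(u_h^{FV}-I_hu,I_h^*(u_h^{FV}-I_hu))$ by means of (\ref{eq:symrhserr}) and (\ref{eq:symnonsymerr}); the perturbation term appearing there is $\mathrm{E}_h(I_hu,\cdot)$, controlled by $\|I_hu\|_{1,h}\leq C\|u\|_{1+\beta}$, so no a priori bound on $u_h^{FV}$ is ever needed. You instead insert the finite element solution $u_h^{FE}$, apply coercivity of $a_h^{FE}$ to $w_h=u_h^{FE}-u_h^{FV}$, and your identity $a_h^{FE}(w_h,v)=(f,v-I_h^*v)+\mathrm{E}_h(u_h^{FV},v)$ does follow correctly from (\ref{eq:dcrprb2}) and (\ref{eq:symrhserr}); the cost is that the perturbation term is now $\mathrm{E}_h(u_h^{FV},\cdot)$, which forces you to prove the stability bound $\|u_h^{FV}\|_a\leq C\|f\|_0$ --- your derivation of it from (\ref{eq:ellipticity}), a broken Poincar\'e--Friedrichs inequality and $\|I_h^*v\|_0\leq C\|v\|_0$ is sound. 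Both routes consume the same ingredients (Lemma~\ref{lem:conv}, the bound $|(f,v-I_h^*v)|\leq Ch\|f\|_0\|v\|_{1,h}$, the nonconforming finite element error estimate, and interpolation theory). What your version buys is a clean separation of the finite volume perturbation, which is $O(h)\|f\|_0$ independently of the regularity of $u$, from the $O(h^\beta)\|u\|_{1+\beta}$ finite element error; the price is that the entire weight of the $\beta>1/2$ regularity hypothesis is shifted onto the consistency-error analysis of $\|u-u_h^{FE}\|_{1,h}$, which you only sketch via Strang's second lemma --- but the paper likewise imports that estimate from the literature, so this is a difference of emphasis rather than a gap.
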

 \begin{proof} A similar proof is given in \cite{rui2008convergence,ewing2002accuracy}.
 
  Let $I_hu\in V_h$ be the CRFE interpolant of $u$ and let $I^*_hu\in V_h^*$ be the CRFVE interpolant of $u$. We start the proof by estimating $\|u_h^{FV}-I_hu\|_{1,h}$. From the coercivity property (\ref{eq:ellipticity}) we have
  
  \begin{align}
  \label{eq:h1part1}
   C_0\|u_h^{FV}-I_hu\|^2_{1,h}&\leq a_h^{FV}(u_h^{FV}-I_hu,I_h^*(u_h^{FV}-I_hu))\nonumber\\
   &=a_h^{FV}(u_h^{FV},I_h^*(u_h^{FV}-I_hu))-a_h^{FV}(I_hu,I_h^*(u_h^{FV}-I_hu))\nonumber\\
   &=(f,I_h^*(u_h^{FV}-I_hu))-a_h^{FE}(u_h^{FE},u_h^{FV}-I_hu)\nonumber\\
   &-a_h^{FE}(I_hu-u_h^{FE},u_h^{FV}-I_hu)-\mathrm{E}_h(I_hu,u_h^{FV}-I_hu).
   \end{align}
In the equations above we have used (\ref{eq:symrhserr}) and (\ref{eq:symnonsymerr}). For clarity of presentation we will split equation (\ref{eq:h1part1}) into three parts and estimate each part independently. Using (\ref{eq:dcrprb2}) and Lemma 5.1 in \cite{Chatzipantelidis:2002:FVE} the two first terms of (\ref{eq:h1part1}) may be estimated by
\begin{align}
 (f,I_h^*(u_h^{FV}-I_hu))-a_h^{FE}(u_h^{FE},u_h^{FV}-I_hu)&=(f,I_h^*(u_h^{FV}-I_hu)-(u_h^{FV}-I_hu))\nonumber\\
 &\leq Ch\|f\|_0\|u_h^{FV}-I_hu\|_{1,h}.\nonumber
\end{align}
From approximation theory, cf. \cite{brenner2008mathematical}, we have that 
\begin{eqnarray}
\label{eq:approxtheory}
 \|u-I_hu\|_{1,h}&\leq& Ch^\beta\|u\|_{1+\beta}\\
 \label{eq:approxtheory2}
 \|I_hu\|_{1,h}&\leq& C\|u\|_{1+\beta}
\end{eqnarray}
which together with the continuity of the finite element bilinear form let us bound the second last remaining term by
\begin{align}
 a_h^{FE}(I_hu-u_h^{FE},u_h^{FV}-I_hu)&\leq C\|I_hu-u_h^{FE}\|_{1,h}\|u_h^{FV}-I_hu\|_{1,h}\nonumber\\
 &\leq C\left(\|I_hu-u\|_{1,h}+\|u-u_h^{FE}\|_{1,h}\right)\|u_h^{FV}-I_hu\|_{1,h}\nonumber\\
 &\leq Ch^\beta\|u\|_{1+\beta}\|u_h^{FV}-I_hu\|_{1,h}.\nonumber
\end{align}
In the second line above we have used the finite element error estimate given below \cite{brenner2008mathematical}
\begin{equation*}
 \|u-u_h^{FE}\|_{1,h}\leq Ch^\beta\|u\|_{1+\beta}.
\end{equation*}
The last term follows straightforwardly from (\ref{eq:symminusnonerror})
\begin{align}
 \mathrm{E}_h(I_hu,u_h^{FV}-I_hu)&\leq Ch\|I_hu\|_{1,h}\|u_h^{FV}-I_hu\|_{1,h}\nonumber\\
 &\leq Ch\|u\|_{1+\beta}\|u_h^{FV}-I_hu\|_{1,h}.\nonumber
\end{align}

%
%
%
Now, combining the estimates above with the results from approximation theory (\ref{eq:approxtheory})--(\ref{eq:approxtheory2}), we get
\begin{align}
 \|u-u_h^{FV}\|_{1,h}&=\|u-I_hu-(u_h^{FV}-I_hu)\|_{1,h}\nonumber\\
 &\leq\|u-I_hu\|_{1,h}+\|u_h^{FV}-I_hu\|_{1,h}\nonumber\\
 &\leq Ch^\beta\|u\|_{1+\beta}+Ch^\beta(\|f\|_0+\|u\|_{1+\beta}).
\end{align}

This completes the proof.
 \end{proof}
 

The main idea in the above proof is motivated by \cite{rui2008convergence,ewing2002accuracy} which in turn was motivated by \cite{chatzipantelidis1999finite}. One of the advantage is that the estimate for $\|u_h^{FV}-I_hu\|_{1,h}$ is not needed, and the approach is more direct and simpler and allows us to apply standard CR finite element error estimation techniques.
\section{The GMRES Method}
\label{sect:gmres}
The linear system of equations which arises from problem (\ref{eq:dcrprb1}) is in general non-symmetric. A popular method for solving such systems is the preconditioned GMRES method; cf. Saad and Schultz \cite{saad1986gmres} and Eistenstat, Elman and Schultz \cite{eisenstat1983variational}. This method has proven to be quite powerful for a large class of non-symmetric problems. The theory originally developed for $L^2(\Omega)$ in \cite{eisenstat1983variational} can easily be extended to an arbitrary Hilbert space; cf. \cite{cai1989some}, see also \cite{cai1992domain}.

We will in this paper use GMRES to solve the linear system of equations
\begin{equation}
 Tu=g,
\end{equation}
where $T$ is a non-symmetric, nonsingular operator, $g\in V_h$ is the right hand side and $u\in V_h$ is the solution vector.

The core of the GMRES method is to solve a least square problem in each iteration, i.e. at step $m$ we approximate the exact solution $u^*=T^{-1}g$ by a vector $u_m\in \mathcal{K}_m$ which minimizes the norm of the residual, where $\mathcal{K}_m$ is the  $m$-th Krylov subspace defined as 
$$\mathcal{K}_m=\text{span}\left\{r_0,Tr_0,\cdots T^{m-1}r_0\right\}$$ and $r_0=g-Tu_0$.
In other words, $z_m$ solves 
\begin{equation*}
 \min_{z\in\mathcal{K}_m}\|g-T(u_0+z)\|_a.
\end{equation*}
Hence, the $m$-th iterate is $u_m=u_0+z_m$.

The convergence rate of the GMRES method is usually expressed in terms of the following two parameters
\begin{equation*}
 c_p=\inf_{u\neq0}\frac{a(Tu,u)}{\|u\|_a^2}\text{ and }C_p=\sup_{u\neq0}\frac{\|Tu\|_a}{\|u\|_a},
\end{equation*}
where $c_p$ corresponds to the smallest eigenvalue of  $\frac{1}{2}(T^t+T)$ the symmetric part of $T$ and $C_p$ corresponds to the square root of the largest eigenvalue of $T^tT$. Here $T^t$ is the transpose of $T$ with respect to the inner product $a(\cdot,\cdot)$. 

The main results regarding the convergence of the GMRES method is stated in the next theorem. It describes the decrease of the norm of the residual in a single step.
\begin{thm}[Eisenstat-Elman-Schultz]
\label{thm:gmres}
If $c_p>0$, then the GMRES method converges and after m steps, the norm of the residual is bounded by
\begin{equation}
 \|r_m\|_a\leq\left(1-\frac{c_p^2}{C_p^2}\right)^{m/2}\|r_0\|_a,
\end{equation}
where $r_m=g-Tu_m$.
\end{thm}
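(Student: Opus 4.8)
The plan is to establish a single-step contraction estimate of the form
$$\|r_m\|_a^2 \leq \left(1-\frac{c_p^2}{C_p^2}\right)\|r_{m-1}\|_a^2$$
and then iterate it $m$ times down to $r_0$, taking square roots at the end to recover the stated bound. Everything reduces to getting this one inequality for a single, typical step.

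First I would extract two elementary consequences of the definitions of $c_p$ and $C_p$: for every $u\in V_h$ one has $a(Tu,u)\geq c_p\|u\|_a^2$ and $\|Tu\|_a\leq C_p\|u\|_a$. Moreover, Cauchy--Schwarz in the inner product $a(\cdot,\cdot)$ gives $a(Tu,u)\leq\|Tu\|_a\|u\|_a\leq C_p\|u\|_a^2$, so that $0<c_p\leq C_p$ and the contraction factor $1-c_p^2/C_p^2$ lies in $[0,1)$; this both justifies the assumption $c_p>0$ as the convergence criterion and confirms that the factor is genuinely less than one.

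The heart of the argument --- and the step I expect to be the main obstacle --- is to exhibit a cheap admissible correction at step $m$ that already forces the desired decrease, thereby collapsing the $m$-dimensional least-squares problem to a one-dimensional one. The key observation is that $r_{m-1}=r_0-Tz_{m-1}$ with $z_{m-1}\in\mathcal{K}_{m-1}$ can be written as $r_{m-1}=\phi_{m-1}(T)r_0$ for some polynomial $\phi_{m-1}$ of degree at most $m-1$, so $r_{m-1}\in\mathcal{K}_m$. Consequently, for every scalar $\alpha$ the vector $r_{m-1}-\alpha Tr_{m-1}=r_0-T(z_{m-1}+\alpha r_{m-1})$ is again of the admissible form $r_0-Tz$ with $z\in\mathcal{K}_m$. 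Since $u_m$ minimizes the residual norm over all of $\mathcal{K}_m$, it follows that
$$\|r_m\|_a\leq\|r_{m-1}-\alpha Tr_{m-1}\|_a\qquad\text{for every }\alpha.$$

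Finally I would expand the right-hand side, using the symmetry of $a(\cdot,\cdot)$,
$$\|r_{m-1}-\alpha Tr_{m-1}\|_a^2=\|r_{m-1}\|_a^2-2\alpha\,a(Tr_{m-1},r_{m-1})+\alpha^2\|Tr_{m-1}\|_a^2,$$
and minimize over $\alpha$ with the optimal choice $\alpha=a(Tr_{m-1},r_{m-1})/\|Tr_{m-1}\|_a^2$, which leaves $\|r_{m-1}\|_a^2-a(Tr_{m-1},r_{m-1})^2/\|Tr_{m-1}\|_a^2$. Inserting the two inequalities from the first step bounds the subtracted quotient below by $(c_p^2/C_p^2)\|r_{m-1}\|_a^2$, giving the single-step contraction. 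A straightforward induction on $m$ then completes the proof.
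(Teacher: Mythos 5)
Your proof is correct, and it is precisely the classical argument of Eisenstat--Elman--Schultz: the one-parameter trial correction $z_{m-1}+\alpha r_{m-1}\in\mathcal{K}_m$ (legitimate because $r_{m-1}$ is a degree-$(m-1)$ polynomial in $T$ applied to $r_0$), followed by minimization over $\alpha$ and the two bounds $a(Tu,u)\ge c_p\|u\|_a^2$, $\|Tu\|_a\le C_p\|u\|_a$. The paper itself offers no proof of this theorem --- it is stated as a known result with citations to Saad--Schultz and Eisenstat--Elman--Schultz --- so there is nothing internal to compare against; your argument reproduces the proof in the cited sources, including the correct observation that Cauchy--Schwarz forces $c_p\le C_p$ so the contraction factor lies in $[0,1)$. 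The only point worth making explicit is the degenerate case $Tr_{m-1}=0$ in the optimal $\alpha$, which is harmless since $c_p>0$ and the nonsingularity of $T$ force $r_{m-1}=0$ there.
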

In the next section we will in Theorem \ref{thm:main2} estimate the two parameters describing the convergence rate of the GMRES method once the proposed domain decomposition preconditioner corresponding to the operator $T$ is defined and analyzed.
\section{An Additive Average Method}
\label{sect:asm}
In this section we introduce the additive method for the discrete problem (\ref{eq:dcrprb1}) and provide bounds on the convergence rate, both for the solution of the symmetric and non-symmetric problem.
\subsection{Decomposition of $V_h(\Omega)$}
We decompose the original space into
\begin{eqnarray}
 V_h(\Omega)=V_0(\Omega)+V_1(\Omega)+\cdots+V_N(\Omega),
\end{eqnarray}
where for $i=1,\ldots,N$ we have defined $V_i(\Omega)$ as the restriction of $V_h(\Omega)$ to $\Omega_i$ with functions vanishing on $\partial \Omega^{CR}_{ih}$ and as well as on the other subdomains. The coarse space $V_0(\Omega)$ is defined as the range of the interpolation operator $I_A$. For $u\in V_h(\Omega)$, we let $I_Au\in V_h(\Omega)$ be defined as
\begin{equation}
I_Au:=\begin{cases} u(x), \qquad x\in\partial\Omega_{ih}^{CR}\\ \hat{u}_i,\qquad\quad x\in\Omega_{ih}^{CR} \end{cases} 
\end{equation}
where 
\begin{equation}
\hat{u}_i:=\frac{1}{n_i}\sum_{x\in\partial\Omega_{ih}^{\mathrm{CR}}}u(x).
\end{equation}
Here $n_i$ is the number of nodal points of $\partial\Omega_{ih}^{CR}$.

We also assume that $\mathcal{T}_h(\Omega_i)$ inherits the shape regular and quasi-uniform triangulation for each $\Omega_i$ with mesh parameters $h_i$ and $H_i=diam(\Omega_i)$. The layer along $\partial\Omega_i$ consisting of unions of triangles $K\in\mathcal{T}(\Omega_i)$ which touch $\partial\Omega_i$ is denoted as $\Omega_i^\delta$.

The local bilinear form is chosen as the CRFE symmetric bilinear form $a_h^{FE}(u,v)$ or as the non-symmetric CRFVE bilinear form $a_h^{FV}(u,v)$.

For $i=0,\cdots,N$ we define the projection like operators $T_i\colon V_h\rightarrow V_i$ as
\begin{equation}
 a_h^{FE}(T^{(1)}_iu,v)=a_h^{FE}(u,v)
 \qquad\forall v\in V_i(\Omega),
\end{equation}
for the symmetric problem (\ref{eq:dcrprb2}). For the non-symmetric problem (\ref{eq:dcrprb1}) we introduce two similar projection like operators. The first one which is symmetric is defined as
\begin{equation}
 a_h^{FE}(T^{(2)}_iu,v)=a_h^{FV}(u,I_h^*v)\qquad \forall v\in V_i(\Omega),
\end{equation}
and the second one which is non-symmetric is defined as
\begin{equation}
 a_h^{FV}(T^{(3)}_iu,v)=a_h^{FV}(u,I_h^*v)\qquad \forall v\in V_i(\Omega).
\end{equation}
Each of these problems have a unique solution. We now introduce 
\begin{equation}
 T^{(k)}_A:=T^{(k)}_0+T^{(k)}_1+\cdots+T^{(k)}_N,\qquad k=1,2,3
\end{equation}
which allow us to replace the original problem (\ref{eq:dcrprb1}) for $k=1$  or (\ref{eq:dcrprb2}) for $k=2,3$  by the equation 
\begin{equation}
\label{eq:prcndsystem}
 T^{(k)}_A u=g^{(k)},
\end{equation}
where $g^{(k)}=\sum_{i=0}^Ng_i$ and $g^{(k)}_i=T^{(k)}_iu$. Note that $g^{(k)}_i$ may be computed without knowing the solution
 $u$ of (\ref{eq:dcrprb1}) or (\ref{eq:dcrprb2}), respectively.
%
\subsection{Analysis}
\label{sect:analysis}
Let $V_h^{quad}(\Omega_i)$ be the space of continuous piecewise quadratic functions on $T_h(\Omega_i)$.
We introduce a local equivalence mapping $\mathcal{M}_i:V_h(\Omega_i)\rightarrow  V_h^{quad}(\Omega_i)$ in a similar way as in \cite{Brenner:1996:TLS}. 
Let $m_x$ be an adjacent midpoint of a vertex $x$ if both points belong to the same edge in $T_h(\Omega_i)$. The choice of the midpoint is not unique and this fact will be used below.
Note that the degrees of freedom of  $V_h^{quad}(\Omega_i)$ is the sum of $\bar\Omega_{ih}^{CR}$ and $x\in\bar\Omega_{ih}$.
\begin{df}
 For $u\in V_h(\Omega_i)$,
 \begin{equation}
\mathcal{M}_iu(m)=\begin{cases} u(m),&m\in\bar\Omega_{ih}^{CR}, \\
u(m_x)&x\in\bar\Omega_{ih} 
\end{cases}
 \end{equation}
\end{df}
The properties of such equivalence mapping, which we are going to use later, are given in the following lemma. 
\begin{lem}
\label{lem:eqvmp}
 Let $\mathcal{M}_i:V_h(\Omega_i)\rightarrow V_h^{quad}(\Omega_i)$ be the local equivalence mapping defined above. The adjacent midpoint $m_x$ is picked as the one whose distant to $\partial\Omega_i$ is the smallest, in particular if $x\in\partial\Omega_{ih}$ then the adjacent midpoint 
is in $\partial\Omega_{ih}^{CR}$.

Then, for any $ u\in V_h{(\Omega_i)}$ we have
 \begin{eqnarray}
  |u|_{1,h,\Omega_i} \leq |\mathcal{M}_iu|_{1,\Omega_i}&\leq&C|u|_{1,h,\Omega_i} ,\\
  \|u-\mathcal{M}_iu\|_{0,\Omega_i}&\leq&Ch_i|u|_{1,h,\Omega_i},\\
  \label{eq:H1bnd-est}
 |\mathcal{M}_iu |_{1,\partial\Omega_i}^2& \leq& C h_i^{-1} |u|_{1,h,\Omega^\delta_i}^2
 \end{eqnarray}
Here $\Omega^\delta_{i}$ is the sum of all triangles $K \in T_h(\Omega_i)$ such that $K$ has an edge or a vertex on $\partial\Omega_i$.
\end{lem}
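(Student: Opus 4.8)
The plan is to reduce all three estimates to local computations on a single triangle (or a small vertex-patch of triangles) and then sum, using the quasi-uniformity of $\mathcal{T}_h(\Omega_i)$ and the bounded overlap of the patches. Since $\mathcal{M}_i u$ is a globally continuous piecewise quadratic, its full seminorms split as sums of elementwise seminorms, $|\mathcal{M}_i u|_{1,\Omega_i}^2=\sum_{K}|\mathcal{M}_i u|_{1,K}^2$, so no interelement jump terms arise. On each $K$ I would pass to a reference triangle $\hat K$ by the affine map; there the restriction of $\mathcal{M}_i$ is a fixed linear map between the finite-dimensional spaces $P_1$ (with the three midpoint functionals) and $P_2$ (with the six Lagrange functionals), and every seminorm or norm appearing is equivalent to the Euclidean norm of the relevant degrees of freedom. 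Standard scaling then reinstates the correct powers of $h_K$.

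The essential bookkeeping issue, and the main obstacle, is that the vertex value $\mathcal{M}_i u(x)=u(m_x)$ depends on a midpoint $m_x$ that may lie in a triangle neighbouring $K$; thus $\mathcal{M}_i u|_K$ is not determined by $u|_K$ alone. Consequently each local bound must be expressed through differences $u(m_e)-u(m_l)$ of the Crouzeix--Raviart degrees of freedom over the vertex-patch $\omega_K$ of $K$, which I would then convert to $|u|_{1,h}^2$ by Lemma \ref{lem:nrmeq}. For the upper bound in the first estimate I would bound $|\mathcal{M}_i u|_{1,K}^2$ by the squared differences of the six nodal values entering $\mathcal{M}_i u|_K$; telescoping a bounded number of such differences across $\omega_K$ and applying Lemma \ref{lem:nrmeq} yields $\le C|u|_{1,h,\omega_K}^2$, and summation over $K$ (with uniformly bounded overlap of the $\omega_K$) gives the global upper bound. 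The lower bound $|u|_{1,h,\Omega_i}\le|\mathcal{M}_i u|_{1,\Omega_i}$ is the easy direction and follows elementwise from the reference-element equivalence; it reflects that the constant gradient of the linear $u|_K$ is essentially the mean of $\nabla(\mathcal{M}_i u)$ over $K$, whence the Cauchy--Schwarz inequality gives $|K|\,|\nabla u|^2\le C\int_K|\nabla(\mathcal{M}_i u)|^2$.

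For the $L^2$ estimate I would exploit that $v:=u-\mathcal{M}_i u$ is, on each $K$, a quadratic vanishing at the three edge midpoints, hence determined by its three vertex values $v(x_a)=u|_K(x_a)-u(m_{x_a})$. Scaling gives $\|v\|_{0,K}^2\le C h_K^2\sum_a v(x_a)^2$, and each $v(x_a)$ is a difference of $u$-values at nearby nodes, bounded again through Lemma \ref{lem:nrmeq} by $C|u|_{1,h,\omega_K}^2$; the extra factor $h_K^2$ is exactly what is needed, and summation finishes the estimate.

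For the trace estimate the new feature is the power $h_i^{-1}$, which comes from the one-dimensional reduction along $\partial\Omega_i$. Along each boundary edge (of length $\sim h_i$) the restriction of $\mathcal{M}_i u$ is a one-dimensional quadratic, so its tangential derivative scales like the differences of its boundary degrees of freedom divided by $h_i$; squaring and integrating over the edge produces $|\mathcal{M}_i u|_{1,\partial\Omega_i}^2\le C h_i^{-1}\sum(\text{boundary DOF differences})^2$. Here the special selection rule in the hypothesis, namely that for $x\in\partial\Omega_{ih}$ the adjacent midpoint $m_x$ is chosen in $\partial\Omega_{ih}^{CR}$, guarantees that these boundary DOFs involve only midpoint values of $u$ in the layer $\Omega_i^\delta$. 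A final application of Lemma \ref{lem:nrmeq} restricted to $\Omega_i^\delta$ bounds the sum of squared differences by $C|u|_{1,h,\Omega_i^\delta}^2$, giving the claim. I expect the delicate points to be the nonlocal dependence on neighbouring midpoints throughout, and, in the trace estimate, keeping the dimensional-reduction scaling and the layer restriction consistent so that exactly $h_i^{-1}$ and exactly $\Omega_i^\delta$ appear.
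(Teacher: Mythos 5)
Your treatment of the trace bound (\ref{eq:H1bnd-est}) --- one-dimensional reduction on each boundary edge so that the tangential derivative of the quadratic is controlled by differences of its boundary degrees of freedom divided by the edge length, the selection rule forcing the vertex values to be midpoint values of $u$ on $\partial\Omega_{ih}^{CR}$, telescoping through the boundedly many adjacent midpoints around each boundary vertex, and a final application of Lemma~\ref{lem:nrmeq} restricted to the layer $\Omega_i^\delta$ --- is exactly the paper's proof, which in fact only proves this third estimate and refers to \cite{Brenner:1996:TLS} for the first two. Your reference-element scaling argument with the vertex-patch bookkeeping for the nonlocal dependence on $m_x$ is the standard proof behind that citation, so the proposal is correct and essentially coincides with the paper's approach.
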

\begin{proof}
The first two statements can be proven in the same way as in \cite{Brenner:1996:TLS}. 

We will prove the last one only.
\begin{eqnarray*}
 |\mathcal{M}_iu |_{1,\partial\Omega_i}^2=\sum_{e\in E_h(\partial\Omega_i)} 
  |\mathcal{M}_iu |_{1,e}^2 \leq C \sum_{e\in E_h(\partial\Omega_i)} \sum_{x\in \partial e}
  \frac{1}{|e|}|\mathcal{M}_iu(x)- \mathcal{M}_iu(m_e)|^2 
\end{eqnarray*}
where $m_e\in\partial\Omega_{ih}^{CR}$ is  the midpoint of an edge $e$.

Note that by the definition  of $\mathcal{M}_iu$ we get that $\mathcal{M}_iu(x)=\mathcal{M}_iu(m_x)$ where $m_x$ is the adjacent midpoint in $\partial\Omega_{ih}^{CR}$, i.e. its left or right neighbor point. 

Thus  by the quasiuniformity of the triangulation and the definition of the equivalence mapping  we get 
\begin{eqnarray*}
 |\mathcal{M}_iu |_{1,\partial\Omega_i}^2\leq \frac{1}{h_i} \sum_{m,s\in\partial\Omega^{CR}_{ih}}
  |\mathcal{M}_iu(m)- \mathcal{M}_iu(s)|^2 =\frac{1}{h_i}\sum_{m,s\in\partial\Omega^{CR}_{ih}}
  |u(m)- u(s)|^2
\end{eqnarray*}
where $m$ and $s$ are neighboring CR points on $\partial\Omega_i$.
Let $x\in \partial\Omega_{ih} $ denote the vertex lying between them, and let 
$\{m_{x,k}\}\subset \Omega_{k,h}^{CR}$ be adjacent midpoints numbered in such a way that 
two successive ones are in one closed element.  
Then from the shape regularity of the triangulation the number of those
midpoints is bounded and a triangle inequality yields that  
$$
  |u(m)-u(s)| \leq |u(m)-u(m_1)|+|u(m_1)-u(m_2)| \ldots +|u(m_k) -u(s)|
$$
Thus, using this and Lemma~\ref{lem:nrmeq} yields that 
$$
  |u(m)-u(s)|^2 \leq C \sum_{x\in \partial K} |u|_{H^1(K)}^2. 
$$
where the sum is taken over all elements $K$ in $\Omega_k$ which has $x$ as a vertex.  

Summing the above estimates over all edges yields the following bound:
\begin{eqnarray*}
 |\mathcal{M}_iu |_{1,\partial\Omega_i}^2
\leq C  h_i^{-1} |u|_{1,h,\Omega^\delta_{i}}^2.
\end{eqnarray*}
\end{proof}
 
We are now ready to prove two lemmas for the interpolation-like operator $I_A$ which will help us analyze and prove the main theorems of our proposed method.
 
\begin{lem}
 \label{lem:stability1}
 For any $u\in V_h$ the following holds:
 \begin{equation}
 \label{eq:stb1}
  a_h^{FE}(I_Au,I_Au)\leq C\max_i\left(\frac{\overline\alpha_i}{\underline{\alpha}_i}\frac{H^2_i}{h^2_i}\right)a_h^{FE}(u,u),
 \end{equation}
 where $\overline\alpha_i:=\sup\limits_{x\in{\bar\Omega_i^\delta}}\alpha(x)$, $\underline\alpha_i:=\inf\limits_{x\in{\bar\Omega^\delta_i}}\alpha(x)$ and $C$ is a positive constant independent of $\alpha$,$\frac{\overline\alpha_i}{\underline{\alpha}_i},H_i$ and $h_i$.
\end{lem}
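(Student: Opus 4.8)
The plan is to work subdomain by subdomain, exploiting the fact that $I_Au$ equals the single constant $\hat u_i$ at \emph{all} interior CR nodes of $\Omega_i$, so that $\nabla(I_Au)$ is supported only in the boundary layer $\Omega_i^\delta$. Since the triangulation is aligned with the $\partial\Omega_i$, the form splits as $a_h^{FE}(I_Au,I_Au)=\sum_{i=1}^N\sum_{K\in\mathcal T_h(\Omega_i)}\int_K\alpha|\nabla I_Au|^2\,dx$, and on $\Omega_i$ only the triangles carrying a boundary edge contribute: for an interior triangle all three midpoint values equal $\hat u_i$, so its gradient vanishes. On a boundary triangle the two interior midpoint values still coincide with $\hat u_i$, so by the local (single-triangle) form of Lemma~\ref{lem:nrmeq} its seminorm is controlled by $(u(m_e)-\hat u_i)^2$, where $m_e$ is the boundary midpoint. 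Bounding $\alpha\le\overline\alpha_i$ on $\bar\Omega_i^\delta$, I would first obtain
\[
 \sum_{K\in\mathcal T_h(\Omega_i)}\int_K\alpha|\nabla I_Au|^2\,dx\;\le\;C\,\overline\alpha_i\sum_{m_e\in\partial\Omega_{ih}^{CR}}\bigl(u(m_e)-\hat u_i\bigr)^2 .
\]

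The second step estimates this discrete variance. Since $\hat u_i$ is the arithmetic mean of the $n_i$ boundary values, the elementary identity $\sum_{m}(u(m)-\hat u_i)^2=\tfrac{1}{2n_i}\sum_{m,s}(u(m)-u(s))^2$ rewrites it as a double sum of pairwise differences. I would then connect any two boundary CR nodes by a chain of at most $n_i$ neighbouring nodes along $\partial\Omega_i$ and apply Cauchy--Schwarz to each pair; combined with $n_i\le CH_i/h_i$ this gives
\[
 \sum_{m_e\in\partial\Omega_{ih}^{CR}}\bigl(u(m_e)-\hat u_i\bigr)^2\;\le\;C\,n_i^2\!\!\sum_{m,s\ \mathrm{nbrs}}\!\!\bigl(u(m)-u(s)\bigr)^2\;\le\;C\frac{H_i^2}{h_i^2}\,|u|_{1,h,\Omega_i^\delta}^2,
\]
where the last inequality is precisely the neighbour-difference bound established inside the proof of Lemma~\ref{lem:eqvmp}. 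This is where the quadratic factor $H_i^2/h_i^2$ enters, and it is sharp, since a value ramping linearly along the boundary saturates the Poincar\'e-type estimate.

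Finally I would reintroduce the coefficient from below: on $\bar\Omega_i^\delta$ we have $\alpha\ge\underline\alpha_i$, hence $|u|_{1,h,\Omega_i^\delta}^2\le\underline\alpha_i^{-1}\,a_h^{FE}(u,u)|_{\Omega_i}$. Chaining the three displays yields the local bound $a_h^{FE}(I_Au,I_Au)|_{\Omega_i}\le C\,\tfrac{\overline\alpha_i}{\underline\alpha_i}\tfrac{H_i^2}{h_i^2}\,a_h^{FE}(u,u)|_{\Omega_i}$; summing over $i$ and pulling the maximum prefactor out of the sum gives (\ref{eq:stb1}).

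The main obstacle I anticipate is the careful localization to $\Omega_i^\delta$: one must verify that $\nabla(I_Au)$ genuinely vanishes on every interior triangle, so that only $\overline\alpha_i$ and $\underline\alpha_i$ taken over the \emph{thin layer} $\bar\Omega_i^\delta$ appear rather than the global coefficient range, and that the per-triangle use of Lemma~\ref{lem:nrmeq} together with the variance/chain argument produces exactly the power $H_i^2/h_i^2$ and no more. This coefficient bookkeeping, matching the $\sup$/$\inf$ over $\bar\Omega_i^\delta$ instead of all of $\Omega_i$, is the delicate point that makes the stated constant correct and underlies the later robustness-to-jumps conclusions.
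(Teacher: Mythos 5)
Your argument is correct and arrives at the right power of $H_i/h_i$, but the middle of your proof takes a genuinely different route from the paper's. Both proofs share the same opening (only triangles with an edge on $\partial\Omega_i$ contribute to $\nabla I_Au$, so everything localizes to $\Omega_i^\delta$ and reduces, via the local form of Lemma~\ref{lem:nrmeq}, to the discrete boundary variance $\sum_{x\in\partial\Omega_{ih}^{CR}}(u(x)-\hat u_i)^2$) and the same closing (bound $|u|^2_{1,h,\Omega_i^\delta}$ by $\underline\alpha_i^{-1}\|u\|^2_{a,\Omega_i^\delta}$ and sum over $i$). Where you diverge is in estimating that variance: you stay entirely discrete, using the identity $\sum_m(u(m)-\hat u_i)^2=\frac{1}{2n_i}\sum_{m,s}(u(m)-u(s))^2$, a chain of at most $n_i$ neighbouring boundary nodes with Cauchy--Schwarz, and $n_i\le CH_i/h_i$, whereas the paper passes to the continuous quadratic representative $\mathcal M_iu$, converts the point sum to $h_i^{-1}\|\mathcal M_iu-\widehat{\mathcal M_iu}\|^2_{0,\partial\Omega_i}$, applies the Poincar\'e inequality on the curve $\partial\Omega_i$ (contributing $H_i^2$), and then the trace-type bound (\ref{eq:H1bnd-est}) (contributing $h_i^{-1}$). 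Your version is more elementary and self-contained at this step --- it needs neither $\mathcal M_i$ nor a boundary Poincar\'e inequality, only the neighbour-difference bound that is itself proved by discrete chaining inside Lemma~\ref{lem:eqvmp} --- and it makes the origin of the factor $H_i^2/h_i^2$ (namely $n_i^2$) completely transparent; the paper's version buys uniformity with the machinery it must set up anyway for Lemma~\ref{lem:stability2}, where the trace theorem and the $L^2$ framing of the variance are what allow the improvement to a single factor of $H_i/h_i$. One small point to make explicit in your write-up: corner triangles of $\Omega_i^\delta$ may carry two boundary midpoints rather than one, which only changes the constant $C$ in your first display, and the chaining between neighbouring boundary CR nodes around a shared vertex must pass through elements that touch $\partial\Omega_i$ only at that vertex, which is exactly why $\Omega_i^\delta$ in Lemma~\ref{lem:eqvmp} is defined to include such elements.
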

\begin{proof}
The idea behind the proof comes from \cite{dryja2010additive}. We start the proof by estimating  
\begin{eqnarray}
\label{eq:stb11}
\nonumber
\|I_Au\|^2_{a,\Omega_i}&=&\|I_Au\|^2_{a,\Omega_i^\delta}\\ \nonumber
&\leq&\overline\alpha_i|I_Au|^2_{1,h,\Omega_i^\delta}\\ \nonumber
&\leq&C\overline\alpha_i\sum_{K\in\mathcal{T}_h(\Omega_i^\delta)}\sum_{e,l\in \mathrm{E}_h(K)}(I_Au)(m_e)-(I_Au)(m_l))^2\\
&\leq&C\overline\alpha_i\sum_{x\in \partial\Omega^{CR}_{ih}}(u(x)-\hat{u}_i)^2\nonumber\\ 
&=&C\overline\alpha_i\sum_{x\in \partial\Omega^{CR}_{ih}}(\mathcal{M}_iu(x)-\widehat{\mathcal{M}_iu})^2\nonumber\\ 
&\leq&C\frac{\overline\alpha_i}{h_i}\|\mathcal{M}_iu-\widehat{\mathcal{M}_iu}\|^2_{0,\partial\Omega_i},\nonumber
\end{eqnarray}
Applying the the Poincare inequality and (\ref{eq:H1bnd-est}) of Lemma \ref{lem:eqvmp} we may write 
\begin{eqnarray}
C\frac{\overline\alpha_i}{h_i}\|\mathcal{M}_iu-\widehat{\mathcal{M}_iu}\|^2_{0,\partial\Omega_i}&\leq&C\overline\alpha_i\frac{H_i^2}{h_i}|\mathcal{M}_iu |_{1,\partial\Omega_i}^2\nonumber\\
&\leq& C \left(\overline\alpha_i\frac{H^2_i}{h^2_i}\right) |u|_{1,h,\Omega^\delta_i}^2\nonumber\\
&\leq& C \left(\frac{\overline\alpha_i}{\underline{\alpha}_i}\frac{H^2_i}{h^2_i}\right) \|u\|_{a,\Omega^\delta_i}^2.\nonumber
\end{eqnarray}
%
%
Summing over all the subdomains and introducing $\max_i\left(\frac{\overline\alpha_i}{\underline{\alpha}_i}\frac{H^2_i}{h^2_i}\right)$ we prove (\ref{eq:stb1}).
\end{proof}
 
Under certain assumptions on the lower bound of $\alpha(x)$ in the interior of each $\Omega_i$ we may improve the above estimate with respect to $\frac{H_i}{h_i}$.
\begin{lem}
 \label{lem:stability2}
 Let $\underline\alpha_i\leq \alpha(x)$ in $\Omega_i\setminus\Omega_i^\delta$. For any $u\in V_h$ the following holds:
 \begin{equation}
  a_h^{FE}(I_Au,I_Au)\leq C\max_i\left(\frac{\overline\alpha_i}{\underline{\alpha}_i}\frac{H_i}{h_i}\right)a_h^{FE}(u,u),
 \end{equation}
 where $\overline\alpha_i:=\sup\limits_{x\in{\bar\Omega_i^\delta}}\alpha(x)$, $\underline\alpha_i:=\inf\limits_{x\in{\bar\Omega^\delta_i}}\alpha(x)$ and $C$ is a positive constant independent of $\alpha$,$\frac{\overline\alpha_i}{\underline{\alpha}_i},H_i$ and $h_i$.
\end{lem}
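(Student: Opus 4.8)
The plan is to follow the proof of Lemma~\ref{lem:stability1} verbatim up to the reduction to nodal deviations on the boundary, and then diverge exactly at the point where the boundary quantity is estimated: I replace the one-dimensional boundary Poincar\'e inequality together with the localized trace bound (\ref{eq:H1bnd-est}) by a genuine two-dimensional trace inequality and Poincar\'e inequality over the whole subdomain $\Omega_i$. This is precisely the step at which the extra factor $h_i^{-1}$ hidden in (\ref{eq:H1bnd-est}) is avoided, which is what turns $H_i^2/h_i^2$ into $H_i/h_i$. First I would reproduce the opening of the proof of Lemma~\ref{lem:stability1}, which uses no assumption on $\alpha$ in the interior and hence remains valid: since $I_Au$ equals the constant $\hat u_i$ at every interior CR node its energy lives on the layer $\Omega_i^\delta$, and using $\alpha\le\overline\alpha_i$ there, Lemma~\ref{lem:nrmeq}, and the definition of the equivalence map one obtains
$$
 \|I_Au\|_{a,\Omega_i}^2 \le C\,\overline\alpha_i \sum_{x\in\partial\Omega_{ih}^{CR}}\bigl(\mathcal{M}_iu(x)-\widehat{\mathcal{M}_iu}\bigr)^2 .
$$

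Second, and this is the new ingredient, I would exploit that $\widehat{\mathcal{M}_iu}=\hat u_i$ is the arithmetic mean of the boundary values and therefore minimizes the least-squares functional $c\mapsto\sum_x(\mathcal{M}_iu(x)-c)^2$. Replacing it by the volume average $\overline{\mathcal{M}_iu}:=|\Omega_i|^{-1}\int_{\Omega_i}\mathcal{M}_iu\,dx$ can only increase the sum, so
$$
 \sum_{x}\bigl(\mathcal{M}_iu(x)-\widehat{\mathcal{M}_iu}\bigr)^2
 \le \sum_{x}\bigl(\mathcal{M}_iu(x)-\overline{\mathcal{M}_iu}\bigr)^2
 \le \frac{C}{h_i}\,\bigl\|\mathcal{M}_iu-\overline{\mathcal{M}_iu}\bigr\|_{0,\partial\Omega_i}^2 ,
$$
the last inequality being the standard equivalence, on a quasi-uniform boundary mesh, between the nodal sum over the boundary CR points and the $L^2(\partial\Omega_i)$-norm of the continuous quadratic $\mathcal{M}_iu$. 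To the boundary norm I would then apply the scaled two-dimensional trace inequality $\|w\|_{0,\partial\Omega_i}^2\le C(H_i|w|_{1,\Omega_i}^2+H_i^{-1}\|w\|_{0,\Omega_i}^2)$ with $w=\mathcal{M}_iu-\overline{\mathcal{M}_iu}$, followed by the Poincar\'e inequality $\|\mathcal{M}_iu-\overline{\mathcal{M}_iu}\|_{0,\Omega_i}^2\le CH_i^2|\mathcal{M}_iu|_{1,\Omega_i}^2$; the $H_i^{-1}$ and $H_i^2$ combine so that the volume term is absorbed and one is left with
$$
 \bigl\|\mathcal{M}_iu-\overline{\mathcal{M}_iu}\bigr\|_{0,\partial\Omega_i}^2 \le C\,H_i\,|\mathcal{M}_iu|_{1,\Omega_i}^2 .
$$

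Finally I would collect the factors: chaining the three displays gives $\|I_Au\|_{a,\Omega_i}^2\le C\,\overline\alpha_i\,(H_i/h_i)\,|\mathcal{M}_iu|_{1,\Omega_i}^2$, and the first estimate of Lemma~\ref{lem:eqvmp} replaces $|\mathcal{M}_iu|_{1,\Omega_i}^2$ by $C|u|_{1,h,\Omega_i}^2$. Here is where the new hypothesis enters: because $\alpha\ge\underline\alpha_i$ now holds on all of $\Omega_i$ and not merely on the layer $\Omega_i^\delta$, we may bound $|u|_{1,h,\Omega_i}^2\le\underline\alpha_i^{-1}\|u\|_{a,\Omega_i}^2$ over the whole subdomain, yielding $\|I_Au\|_{a,\Omega_i}^2\le C(\overline\alpha_i/\underline\alpha_i)(H_i/h_i)\|u\|_{a,\Omega_i}^2$; summing over $i$ and pulling out the maximum finishes the proof. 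The main obstacle, and really the crux of the lemma, is the middle paragraph: recognizing that the discrete boundary average may be swapped for the volume average and that a full two-dimensional trace/Poincar\'e argument, rather than the boundary-only bound (\ref{eq:H1bnd-est}) that costs an extra $h_i^{-1}$, is what produces the single power of $H_i/h_i$. The price of this route is that the energy now appears over the entire $\Omega_i$, which is exactly why the interior lower bound on $\alpha$ is indispensable.
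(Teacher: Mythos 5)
Your proof is correct and follows essentially the same route as the paper's: starting from the layer estimate inherited from Lemma~\ref{lem:stability1}, you reduce to $h_i^{-1}\|\mathcal{M}_iu-c\|^2_{0,\partial\Omega_i}$ for a constant $c$, apply the scaled trace theorem together with a Poincar\'e inequality over all of $\Omega_i$ to gain only a single factor of $H_i$, and then use the interior lower bound $\alpha\geq\underline\alpha_i$ on $\Omega_i\setminus\Omega_i^\delta$ to convert $|u|^2_{1,h,\Omega_i}$ into $\underline\alpha_i^{-1}\|u\|^2_{a,\Omega_i}$, exactly as in the paper. The only (harmless, and in fact tidier) deviation is that you swap the discrete boundary average $\widehat{\mathcal{M}_iu}$ for the volume average via the least-squares property of the arithmetic mean so that the standard mean-value Poincar\'e inequality applies, whereas the paper keeps $\widehat{\mathcal{M}_iu}$ and invokes the corresponding Poincar\'e variant directly.
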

\begin{proof}
From the proof of Lemma \ref{lem:stability1} we have that
\begin{eqnarray}
\label{eq:stb}
\|I_Au\|^2_{a,\Omega_i}
&\leq&C\frac{\overline\alpha_i}{h_i}\|\mathcal{M}_iu-\widehat{\mathcal{M}_iu}\|^2_{0,\partial\Omega_i}.\nonumber
\end{eqnarray}
Using a scaling argument and a trace theorem we may write: 
\begin{eqnarray}
\label{eq:stb2}
\|I_Au\|^2_{a,\Omega_i}&\leq&C\frac{\overline\alpha_i}{h_i}\|\mathcal{M}_iu-\widehat{\mathcal{M}_iu}\|^2_{0,\partial\Omega_i}\nonumber\\
&\leq&C\overline\alpha_i\frac{H_i}{h_i}\left\{|\mathcal{M}_iu|^2_{1,h,\Omega_i}+H_i^{-2}\|\mathcal{M}_iu-\widehat{\mathcal{M}_iu}\|^2_{0,\Omega_i}\right\}\\
&\leq&C\overline\alpha_i\frac{H_i}{h_i}|\mathcal{M}_iu|^2_{1,h,\Omega_i}\nonumber\\
&=&C\overline\alpha_i\frac{H_i}{h_i}|u|^2_{1,h,\Omega_i}\nonumber\\
&\leq&C\frac{\overline\alpha_i}{\underline\alpha_i}\frac{H_i}{h_i}\|u\|^2_{a,\Omega_i}\nonumber
\end{eqnarray}
where we have used the properties of $\mathcal{M}_i$, and Poincare's inequality on the last term in the curly brackets. Summing over all the subdomains and introducing $\max_i\left(\frac{\overline\alpha_i}{\underline{\alpha}_i}\frac{H_i}{h_i}\right)$ completes the proof.
\end{proof}
 
Using the two lemmas above we may now state two theorems and two propositions for the convergence rate of our proposed preconditioner applied to the linear system arising from the symmetric problem (\ref{eq:dcrprb2}) and for the linear system arising from the non-symmetric problem (\ref{eq:dcrprb1}). We first prove the convergence rate for our ASM applied to the symmetric problem (\ref{eq:dcrprb2})
\begin{thm}
\label{thm:main1}
 For any $u\in V_h$ the following holds:
 \begin{equation}
 \label{eq:thm1}
  C_1\beta_1^{-1}a_h^{FE}(u,u)\leq a_h^{FE}(T^{(1)}_Au,u)\leq C_2a_h^{FE}(u,u),
 \end{equation}
where $\beta_1=\max_i\left(\frac{\overline\alpha_i}{\underline{\alpha}_i}\frac{H_i^2}{h_i^2}\right)$ and the positive constants $C_1$ and $C_2$ is independent of $\alpha$,$\frac{\overline\alpha_i}{\underline{\alpha}_i},H_i$ and $h_i$ for $i=1,\cdots,N$.
\end{thm}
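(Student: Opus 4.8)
The plan is to establish the standard abstract Schwarz framework estimates for the additive operator $T_A^{(1)}$ applied to the symmetric problem, namely an upper bound (boundedness) and a lower bound (stable splitting / coercivity), from which the two-sided inequality (\ref{eq:thm1}) follows. Since $T_A^{(1)}$ is defined through the symmetric bilinear form $a_h^{FE}(\cdot,\cdot)$ with exact local solves, $T_A^{(1)}$ is self-adjoint and positive definite in the $a_h^{FE}$-inner product, so it suffices to bound its spectrum from above and below.

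\textbf{Upper bound.} First I would prove the right inequality $a_h^{FE}(T_A^{(1)}u,u)\leq C_2\, a_h^{FE}(u,u)$. The standard route is to bound the spectral radius of $T_A^{(1)}$ by a coloring/finite-overlap argument. Each $T_i^{(1)}$ for $i=1,\dots,N$ is an $a_h^{FE}$-orthogonal projection onto $V_i(\Omega)$, hence has norm $1$; because the subdomains $\Omega_i$ are nonoverlapping and the local spaces $V_i(\Omega)$ have essentially disjoint supports (with only boundary coupling), the number of $V_i$ that interact at any point is uniformly bounded by the shape-regularity of the coarse triangulation. Together with the coarse operator $T_0^{(1)}$, which also satisfies $a_h^{FE}(T_0^{(1)}u,u)\leq a_h^{FE}(u,u)$, a Cauchy--Schwarz plus coloring argument yields $\lambda_{\max}(T_A^{(1)})\leq C_2$ with $C_2$ independent of the mesh and coefficient data. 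This half I expect to be routine.

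\textbf{Lower bound.} The substantive part is the left inequality, equivalently a stable decomposition: for every $u\in V_h$ there exist $u_i\in V_i(\Omega)$ with $u=\sum_{i=0}^N u_i$ and $\sum_{i=0}^N a_h^{FE}(u_i,u_i)\leq C_1^{-1}\beta_1\, a_h^{FE}(u,u)$. The natural choice is $u_0=I_A u\in V_0(\Omega)$ and, for $i\geq 1$, $u_i$ the local component capturing $u-I_Au$ restricted to $\Omega_i$ (with vanishing CR values on $\partial\Omega_{ih}^{CR}$). The coarse term is controlled \emph{directly} by Lemma~\ref{lem:stability1}, which gives $a_h^{FE}(I_Au,I_Au)\leq C\beta_1\, a_h^{FE}(u,u)$; this is precisely where the factor $\beta_1=\max_i(\tfrac{\overline\alpha_i}{\underline\alpha_i}\tfrac{H_i^2}{h_i^2})$ enters. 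For the local terms I would write $u-I_Au=\sum_{i\geq 1}u_i$ and bound each $\|u_i\|_a^2$ on $\Omega_i$ using the triangle inequality $\|u_i\|_{a,\Omega_i}\leq \|u\|_{a,\Omega_i}+\|I_Au\|_{a,\Omega_i}$ together with the fact that $u-I_Au$ vanishes at the CR nodes on $\partial\Omega_{ih}^{CR}$ (so no extra subdomain Poincaré constant is needed), again invoking Lemma~\ref{lem:stability1} to absorb the coarse contribution into $\beta_1$. Summing over $i$ and using the nonoverlapping structure gives the stable decomposition constant $C_1^{-1}\beta_1$, and the abstract Schwarz lemma then yields $a_h^{FE}(T_A^{(1)}u,u)\geq C_1\beta_1^{-1}a_h^{FE}(u,u)$.

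\textbf{Main obstacle.} The delicate point is ensuring that \emph{all} the coefficient dependence is collected into the single factor $\beta_1$ and that the constants $C_1,C_2$ are genuinely independent of the jumps $\overline\alpha_i/\underline\alpha_i$, the ratios $H_i/h_i$, and $\alpha$ itself. This requires that the local energy estimates be performed with the local extreme values $\overline\alpha_i,\underline\alpha_i$ factored out cleanly (as already arranged in Lemma~\ref{lem:stability1}), and that the stable-decomposition bound for the local components not hide an additional coefficient ratio; keeping the coarse interpolant $I_Au$ as the only place where $\beta_1$ appears, and verifying that the local terms inherit the \emph{same} bound rather than a worse one, is the crux of the argument. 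Everything else reduces to the abstract additive Schwarz theory (cf. \cite{toselli2005domain}) combined with Lemma~\ref{lem:stability1}.
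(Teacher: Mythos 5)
Your proposal is correct and follows essentially the same route as the paper: the paper verifies the three standard assumptions of the abstract additive Schwarz theory, using exactly your decomposition $u_0=I_Au$, $u_i=(u-I_Au)|_{\Omega_i}$, with Lemma~\ref{lem:stability1} supplying the factor $\beta_1$ through the coarse term and the triangle inequality handling the local terms; the upper bound follows from $\rho(\mathcal{E})=1$ (mutual orthogonality of the nonoverlapping local spaces) and $\omega=1$ (exact solvers), which is the same content as your coloring argument.
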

\begin{proof}
Following the general theory of ASMs, we need to check the three key assumptions (\cite{smith1996domain, toselli2005domain}).
\begin{ass}[1]
For all $u\in V_h$ there exists a representation $u=\sum_{i=0}^N u_i,\; u_i\in V_i$, such that
\begin{equation}
 \label{eq:ass1}
\sum_{i=0}^Na_h^{FE}(u_i,u_i)\leq C\beta_1a_h^{FE}(u,u).
\end{equation}
\end{ass}
Let $u_0=I_A u$ for $u\in V_h(\Omega)$ and $u_i:=u-u_0$ on $\overline\Omega_i$ and $u_i=0$ outside of $\Omega_i$. Obviously $u_i\in V_i(\Omega)$ for $i=0,\ldots,N$, and $u=\sum_{i=0}^Nu_i$.  We then have
\begin{eqnarray}
\sum_{i=1}^N a_h^{FE}(u_i,u_i)+a_h^{FE}(u_0,u_0)&=&\sum_{i=1}^N a_h^{FE}(u-u_0,u-u_0)+a_h^{FE}(u_0,u_0)\nonumber\\
&\leq&2\sum_{i=1}^N \{a_h^{FE}(u,u)+a_h^{FE}(u_0,u_0)\}+a_h^{FE}(u_0,u_0)\nonumber\\
&=&2a_h^{FE}(u,u)+3a_h^{FE}(u_0,u_0).
\end{eqnarray}
Using Lemma \ref{lem:stability1} on the last term we obtain $\beta_1$ in (\ref{eq:ass1}) immediately.

\begin{ass}[2] Let $0\leq\mathcal{E}_{ij}\leq1$ be the minimal values that satisfy 
\begin{equation*}
 a_h^{FE}(u_i,u_j)\leq\mathcal{E}_{ij}a_h^{FE}(u_i,u_i)^{1/2}a_h^{FE}(u_j,u_j)^{1/2},\qquad\forall u_i\in V,\;\forall u_j\in V_j\;i,j=1,\ldots,N
\end{equation*}
Define $\rho(\mathcal{E})$ to be the spectral radius of $\mathcal{E}=\{\mathcal{E}_{ij}\}$.
\end{ass}
\noindent In our case $V_i$ and $V_j$ are orthogonal for $i\neq j$, thus $\rho(\mathcal{E})=1$.

Since we are using exact bilinear forms the next assumption is trivially satisfied with $\omega=1$ for $i=1,\ldots,N$.
\begin{ass}[3]
Let $\omega>0$ be the minimal constant such that 
\begin{equation*}
 a_h^{FE}(u,u)\leq\omega a_h^{FE}(u,u),\qquad u\in V_i.
\end{equation*}
\end{ass}
\end{proof}
 
This results may be improved if the condition on the distribution of $\alpha$ in Lemma~\ref{lem:stability2} is satisfied as shown in the next Proposition.
\begin{prop}
\label{prop:main1}
 Let $\underline\alpha_i\leq \alpha(x)$ in $\Omega_i\setminus\Omega_i^\delta$. For any $u\in V_h$ the following holds:
 \begin{equation}
 \label{eq:prop1}
  C_1\beta_1^{-1}a_h^{FE}(u,u)\leq a(T^{(1)}_Au,u)\leq C_2a_h^{FE}(u,u),
 \end{equation}
where $\beta_1=\max_i\left(\frac{\overline\alpha_i}{\underline{\alpha}_i}\frac{H_i}{h_i}\right)$ and the positive constants $C_1$ and $C_2$ is independent of $\alpha$,$\frac{\overline\alpha_i}{\underline{\alpha}_i},H_i$ and $h_i$ for $i=1,\cdots,N$.
\end{prop}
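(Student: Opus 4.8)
The plan is to mirror the proof of Theorem~\ref{thm:main1} almost verbatim, since the operator $T^{(1)}_A$, the space decomposition $V_h(\Omega)=V_0+V_1+\cdots+V_N$, and the abstract machinery are identical; the single point at which the argument is sharpened is the verification of the stable decomposition property, where the extra hypothesis $\underline\alpha_i\leq\alpha(x)$ on $\Omega_i\setminus\Omega_i^\delta$ allows us to invoke Lemma~\ref{lem:stability2} in place of Lemma~\ref{lem:stability1}. First I would again appeal to the standard additive Schwarz framework of \cite{smith1996domain,toselli2005domain}, which reduces the two-sided spectral bound~(\ref{eq:prop1}) to checking the same three assumptions used for Theorem~\ref{thm:main1}.

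For Assumption~1 I would reuse the identical splitting, namely $u_0=I_Au$ together with $u_i=u-u_0$ on $\overline\Omega_i$ and $u_i=0$ elsewhere, so that $u=\sum_{i=0}^N u_i$ with each $u_i\in V_i(\Omega)$. The triangle-inequality bookkeeping that produced $\sum_{i=1}^N a_h^{FE}(u_i,u_i)+a_h^{FE}(u_0,u_0)\leq 2a_h^{FE}(u,u)+3a_h^{FE}(u_0,u_0)$ carries over without any change, so the whole estimate collapses to controlling the coarse term $a_h^{FE}(u_0,u_0)=a_h^{FE}(I_Au,I_Au)$. This is precisely where the new coefficient assumption enters: because $\underline\alpha_i\leq\alpha(x)$ holds on $\Omega_i\setminus\Omega_i^\delta$, Lemma~\ref{lem:stability2} is applicable and delivers $a_h^{FE}(I_Au,I_Au)\leq C\max_i\bigl(\tfrac{\overline\alpha_i}{\underline\alpha_i}\tfrac{H_i}{h_i}\bigr)a_h^{FE}(u,u)$, which yields the improved constant $\beta_1=\max_i\bigl(\tfrac{\overline\alpha_i}{\underline\alpha_i}\tfrac{H_i}{h_i}\bigr)$ in~(\ref{eq:ass1}) rather than the quadratic $H_i^2/h_i^2$.

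Assumptions~2 and~3 require no modification and I would dispatch them exactly as before. The local subspaces $V_i$ and $V_j$ remain mutually $a_h^{FE}$-orthogonal for $i\neq j$, so the strengthened Cauchy--Schwarz matrix $\mathcal{E}$ satisfies $\rho(\mathcal{E})=1$, and since the local solvers use the exact bilinear form $a_h^{FE}(\cdot,\cdot)$ the local stability constant is $\omega=1$. Feeding $\beta_1$, $\rho(\mathcal{E})=1$ and $\omega=1$ into the abstract lower and upper spectral bounds then produces the lower bound $C_1\beta_1^{-1}a_h^{FE}(u,u)$ and the upper bound $C_2 a_h^{FE}(u,u)$ of~(\ref{eq:prop1}), with $C_1,C_2$ independent of $\alpha$, $\overline\alpha_i/\underline\alpha_i$, $H_i$ and $h_i$.

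I do not expect a genuine obstacle, since the statement is a direct strengthening of Theorem~\ref{thm:main1}; the one thing to check carefully is that the region on which the lower coefficient bound is imposed, $\Omega_i\setminus\Omega_i^\delta$, is exactly the interior away from the boundary layer $\Omega_i^\delta$ that the scaling and trace estimate inside Lemma~\ref{lem:stability2} relies upon, so that the application of that lemma is legitimate. Everything else reduces to the same bookkeeping already carried out for the previous theorem.
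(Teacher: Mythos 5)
Your proposal is correct and matches the paper's own proof, which states only that the argument is completely analogous to Theorem~\ref{thm:main1} with Lemma~\ref{lem:stability2} applied in place of Lemma~\ref{lem:stability1}; your write-up simply fills in the same bookkeeping explicitly. No further changes are needed.
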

\begin{proof}
 The proof is completely analogous to Theorem \ref{thm:main1}, but Lemma~\ref{lem:stability2} is applied instead of Lemma~\ref{lem:stability1}.
\end{proof}
 
The main theorem for the GMRES convergence rate of our ASM applied to the non-symmetric problem (\ref{eq:dcrprb1}) is stated below
\begin{thm} \label{thm:main2}
 There exists $h_0>0$ such that for all $h<h_0$, $k=2,3,$ and $u\in V_h$, we have
\begin{eqnarray*}
 \|T^{(k)}u\|_a&\leq& C\|u\|_a,  \\
a_h^{FE}(T^{(k)}u,u)&\geq& c \max_i\frac{\overline\alpha_i}{\underline{\alpha}_i}\left(\frac{H_i}{h_i}\right)^{-2}
 \: a_h^{FE}(u,u) ,
\end{eqnarray*}
where $C,c$ are positive constants independent of $\alpha$, $\frac{\overline\alpha_i}{\underline{\alpha}_i}$, $h_i$ and $H_i$ for $i=1,\ldots,N.$
\end{thm}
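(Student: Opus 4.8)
The plan is to treat the two operators $T^{(2)}_A$ and $T^{(3)}_A$ attached to the non-symmetric problem as $O(h)$ perturbations of the symmetric operator $T^{(1)}_A$, for which Theorem~\ref{thm:main1} already supplies both the coercivity bound $a_h^{FE}(T^{(1)}_Au,u)\geq C_1\beta_1^{-1}a_h^{FE}(u,u)$ and the boundedness bound $a_h^{FE}(T^{(1)}_Au,u)\leq C_2a_h^{FE}(u,u)$; since $T^{(1)}_A$ is symmetric positive definite in $a_h^{FE}(\cdot,\cdot)$, the latter also yields $\|T^{(1)}_Au\|_a\leq C_2\|u\|_a$. The single extra ingredient is the consistency estimate (\ref{eq:symminusnonerror}) together with the definition (\ref{eq:symnonsymerr}) of $\mathrm{E}_h$; using $\alpha\geq1$ (so that $|u|_{1,h}\leq\|u\|_a$) and a broken Poincaré inequality, (\ref{eq:symminusnonerror}) upgrades to $\mathrm{E}_h(u,v)\leq Ch\|u\|_a\|v\|_a$ with a constant independent of the coefficient, which is what drives the whole argument.

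First I would record the local stability of the perturbed solvers. For $k=3$, testing the defining identity $a_h^{FV}(T^{(3)}_iu,I_h^*v)=a_h^{FV}(u,I_h^*v)$ with $v=T^{(3)}_iu\in V_i$ and invoking the local versions of ellipticity (\ref{eq:ellipticity}) and boundedness (\ref{eq:boundedness}) of the FVE form gives $C_0\|T^{(3)}_iu\|_a^2\leq a_h^{FV}(u,I_h^*T^{(3)}_iu)\leq C_1\|u\|_a\|T^{(3)}_iu\|_a$, hence $\|T^{(3)}_iu\|_a\leq(C_1/C_0)\|u\|_a$; the case $k=2$ is immediate from the symmetry of its left-hand form. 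Next I would estimate the difference $R^{(k)}_i:=T^{(k)}_i-T^{(1)}_i$. Rewriting the defining identities through (\ref{eq:symnonsymerr}) yields, for all $v\in V_i$, the relations $a_h^{FE}(R^{(2)}_iu,v)=\mathrm{E}_h(u,v)$ and $a_h^{FE}(R^{(3)}_iu,v)=\mathrm{E}_h(u,v)-\mathrm{E}_h(T^{(3)}_iu,v)$. Choosing $v=R^{(k)}_iu$, exploiting that $\mathrm{E}_h(\cdot,v)$ localizes to the support $\Omega_i$ of $v$, and using the upgraded consistency estimate with the stability just proved, I obtain $\|R^{(k)}_iu\|_a\leq Ch\|u\|_{a,\Omega_i}$. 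Since for $i\geq1$ the spaces $V_i$ have pairwise disjoint supports and are therefore $a_h^{FE}$-orthogonal, summing the squares and adding the single coarse contribution $R^{(k)}_0$ gives $\|(T^{(k)}_A-T^{(1)}_A)u\|_a\leq Ch\|u\|_a$.

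With the perturbation controlled, the two asserted estimates follow by a triangle/Cauchy--Schwarz splitting. For boundedness I would write $\|T^{(k)}_Au\|_a\leq\|T^{(1)}_Au\|_a+\|(T^{(k)}_A-T^{(1)}_A)u\|_a\leq(C_2+Ch)\|u\|_a$, which is $\leq C\|u\|_a$ once $h\leq1$. For coercivity I would split $a_h^{FE}(T^{(k)}_Au,u)=a_h^{FE}(T^{(1)}_Au,u)+a_h^{FE}((T^{(k)}_A-T^{(1)}_A)u,u)\geq(C_1\beta_1^{-1}-Ch)\|u\|_a^2$ and then absorb the perturbation by requiring $h<h_0$ small enough that $Ch\leq\tfrac12C_1\beta_1^{-1}$, leaving the stated lower bound with $c=\tfrac12C_1$ and $\beta_1^{-1}=\min_i\tfrac{\underline\alpha_i}{\overline\alpha_i}(H_i/h_i)^{-2}$, matching Theorem~\ref{thm:main1}.

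The main obstacle is precisely this final absorption step, and with it the $k=3$ analysis. Because the coercivity of $T^{(1)}_A$ degenerates like $\beta_1^{-1}\sim(h/H)^2$ while the consistency perturbation is only $O(h)$, the threshold $h_0$ cannot be taken uniform in $H/h$ and in the contrast $\overline\alpha_i/\underline\alpha_i$: it must shrink with $\beta_1^{-1}$, and only the final constants $c,C$ stay independent of these quantities. The delicate bookkeeping is that $R^{(3)}_i$ is defined implicitly through $T^{(3)}_i$ itself, so its bound must be bootstrapped from the a priori stability estimate $\|T^{(3)}_iu\|_a\leq(C_1/C_0)\|u\|_a$ before the consistency estimate can be applied; the one remaining point to verify carefully is that the ellipticity and boundedness constants $C_0,C_1$ of (\ref{eq:boundedness})--(\ref{eq:ellipticity}) are available on each subdomain $\Omega_i$ separately, which follows by rerunning the localized form of Lemma~\ref{lem:conv}.
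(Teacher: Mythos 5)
Your overall strategy --- treat $T^{(2)}_A,T^{(3)}_A$ as $O(h)$ perturbations of $T^{(1)}_A$ and absorb --- is not the route the paper takes (the paper simply verifies the three assumptions of the abstract framework of \cite{Marcinkowski:2014:ASMFVE}, namely the $O(h)$ closeness of the two bilinear forms from Lemma~\ref{lem:conv}, the stable decomposition from Theorem~\ref{thm:main1}, and the strengthened Cauchy--Schwarz inequalities), and unfortunately your final absorption step does not close. You bound $\|(T^{(k)}_A-T^{(1)}_A)u\|_a\leq Ch\|u\|_a$ and then subtract this from the coercivity constant $C_1\beta_1^{-1}$ of $T^{(1)}_A$, requiring $Ch\leq\tfrac12 C_1\beta_1^{-1}$. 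But $\beta_1^{-1}=\min_i\frac{\underline{\alpha}_i}{\overline{\alpha}_i}\bigl(\frac{h_i}{H_i}\bigr)^{2}$ itself scales like $h^2/H^2$, so the requirement reads $h\geq C''\,\frac{\overline{\alpha}}{\underline{\alpha}}H^2$: it is a \emph{lower} bound on $h$, violated for all sufficiently small $h$. Hence no threshold $h_0$ with the property ``for all $h<h_0$'' can be extracted this way; your remark that ``$h_0$ must shrink with $\beta_1^{-1}$'' misdiagnoses the problem, since $\beta_1^{-1}$ is not a fixed constant but decays quadratically in $h$, faster than the $O(h)$ perturbation you are trying to absorb. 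The upper bound $\|T^{(k)}_Au\|_a\leq C\|u\|_a$ and your local stability and localization arguments for $R^{(k)}_i$ are fine; it is only the lower bound that breaks.

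The repair is to avoid passing through the operator-norm perturbation and instead run the standard non-symmetric Schwarz lower bound at the level of $X^2:=\sum_i\|T^{(k)}_iu\|_a^2$. Writing $a_h^{FE}(T^{(k)}u,u)=\sum_i a_h^{FE}(u,T^{(k)}_iu)$ and converting each term via $a_h^{FE}(u,T^{(k)}_iu)=\|T^{(k)}_iu\|_a^2-\mathrm{E}_h(\,\cdot\,,T^{(k)}_iu)$ (with your localized bound on $\mathrm{E}_h$) gives $a_h^{FE}(T^{(k)}u,u)\geq X^2-ChX\|u\|_a$. A Lions-type argument with the stable decomposition $u=\sum_iu_i$, $\sum_i\|u_i\|_a^2\leq C\beta_1\|u\|_a^2$, combined again with the $\mathrm{E}_h$ bound, yields $X\geq\bigl((C\beta_1)^{-1/2}-Ch\bigr)\|u\|_a$. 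Now the perturbation only has to be small relative to $\beta_1^{-1/2}\sim\frac{h}{H}\sqrt{\underline{\alpha}/\overline{\alpha}}$, i.e.\ the smallness condition becomes $h\beta_1^{1/2}\lesssim1$, which does not degenerate as $h\to0$, and one recovers $a_h^{FE}(T^{(k)}u,u)\geq c\beta_1^{-1}\,a_h^{FE}(u,u)$ as claimed. This is essentially the content of the framework the paper invokes; the assumptions you would need to verify for it are exactly the ones you have already assembled.
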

\begin{proof}
Following the framework of \cite{Marcinkowski:2014:ASMFVE} we need to prove three assumptions.
\begin{ass}[1] For all $u,v\in V_h$ the following holds
  \begin{eqnarray}
  |a_h^{FE}(u,v)-a_h^{FV}(u,I_h^*v)|\leq Ch\|u\|_a\|v\|_a, 
 \end{eqnarray}
\end{ass}
\begin{ass}[2]
 For all $u\in V_h$ there exists a representation $u=\sum_{i=0}^N u_i,\; u_i\in V_i$, such that
\begin{equation}
 \label{eq:ass2}
\sum_{i=0}^Na_h^{FE}(u_i,u_i)\leq C\beta_1a_h^{FE}(u,u).
\end{equation}
\end{ass}
\begin{ass}[3]
 Let $0\leq\mathcal{E}_{ij}\leq1$ be the minimal values that satisfy 
\begin{equation*}
 a_h^{FE}(u_i,u_j)\leq\mathcal{E}_{ij}a_h^{FE}(u_i,u_i)^{1/2}a_h^{FE}(u_j,u_j)^{1/2},\qquad\forall u_i\in V,\;\forall u_j\in V_j\;i,j=1,\ldots,N
\end{equation*}
Define $\rho(\mathcal{E})$ to be the spectral radius of $\mathcal{E}=\{\mathcal{E}_{ij}\}$.
\end{ass}
These assumptions have been proven in Theorem \ref{thm:main1} and Lemma \ref{lem:conv}.

\end{proof}
 

In the same way as for the convergence rate of our ASM applied to the symmetric problem we may improve the estimate of the last theorem if the condition of the distribution of $\alpha$  in Lemma~\ref{lem:stability2} are satisfied.
\begin{prop}
 There exists $h_0>0$ such that for all $h<h_0$, $k=2,3,$, $u\in V_h$ and $\underline\alpha_i\leq \alpha(x)$ in $\Omega_i\setminus\Omega_i^\delta$, we have
\begin{eqnarray*}
 \|T^{(k)}u\|_a&\leq& C\|u\|_a,  \\
a_h^{FE}(T^{(k)}u,u)&\geq& c\max_i \frac{\overline\alpha_i}{\underline{\alpha}_i}\left(\frac{H_i}{h_i}\right)^{-1}
 \: a_h^{FE}(u,u) \qquad \forall u \in V_h,
\end{eqnarray*}
where $C,c$ are positive constants independent of $\alpha$, $\frac{\overline\alpha_i}{\underline{\alpha}_i}$, $h_i$ and $H_i$ for $i=1,\ldots,N.$
\end{prop}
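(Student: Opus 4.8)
The plan is to mirror the proof of Theorem \ref{thm:main2} line for line, replacing the single ingredient that is sensitive to the extra hypothesis $\underline\alpha_i\le\alpha(x)$ in $\Omega_i\setminus\Omega_i^\delta$. Recall that Theorem \ref{thm:main2} rests on the three abstract assumptions of the framework of \cite{Marcinkowski:2014:ASMFVE}: (i) the consistency bound $|a_h^{FE}(u,v)-a_h^{FV}(u,I_h^*v)|\le Ch\|u\|_a\|v\|_a$; (ii) a stable splitting $\sum_{i=0}^N a_h^{FE}(u_i,u_i)\le C\beta_1\,a_h^{FE}(u,u)$; and (iii) the strengthened Cauchy--Schwarz condition encoded by $\rho(\mathcal{E})$. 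First I would observe that assumptions (i) and (iii) are blind to the distribution of $\alpha$ in the interior of the subdomains: (i) is exactly estimate (\ref{eq:difffefve}) of Lemma \ref{lem:conv}, and (iii) follows from the $a_h^{FE}$-orthogonality of the local spaces $V_i$, $i=1,\dots,N$, giving $\rho(\mathcal{E})=1$ precisely as in Theorem \ref{thm:main1}. Neither changes here.

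The only modification occurs in assumption (ii). I would take the same splitting as in Theorem \ref{thm:main1}, namely $u_0=I_Au$ and $u_i=u-u_0$ on $\overline\Omega_i$ extended by zero, so that $u=\sum_{i=0}^N u_i$ with $u_i\in V_i$. Bounding $\sum_i a_h^{FE}(u_i,u_i)$ reduces everything to controlling $a_h^{FE}(I_Au,I_Au)$, and at this step, because $\underline\alpha_i\le\alpha$ on $\Omega_i\setminus\Omega_i^\delta$ now holds, I would invoke Lemma \ref{lem:stability2} in place of Lemma \ref{lem:stability1}. This replaces the factor $\max_i(\overline\alpha_i/\underline\alpha_i)(H_i/h_i)^2$ by $\beta_1=\max_i(\overline\alpha_i/\underline\alpha_i)(H_i/h_i)$, so that (ii) holds with this improved $\beta_1$; this is exactly the splitting already established inside Proposition \ref{prop:main1}.

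With the three assumptions in hand, both displayed inequalities follow from the abstract GMRES/ASM estimates of \cite{Marcinkowski:2014:ASMFVE} just as in Theorem \ref{thm:main2}. The continuity bound $\|T^{(k)}u\|_a\le C\|u\|_a$ is obtained exactly as there, from the boundedness of the local solvers together with assumption (i), and does not see the change in $\beta_1$. For the coercivity of the symmetric part, the abstract theory converts a stable splitting with constant $\beta_1$ into a lower bound $a_h^{FE}(T^{(2)}_Au,u)\ge c\,\beta_1^{-1}a_h^{FE}(u,u)$, which with the new $\beta_1=\max_i(\overline\alpha_i/\underline\alpha_i)(H_i/h_i)$ is precisely the asserted $(H_i/h_i)^{-1}$ estimate, and then transfers this bound from $T^{(2)}$ to the genuinely non-symmetric $T^{(3)}$ by absorbing the perturbation controlled in assumption (i).

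The point that requires care — and the reason for the threshold $h_0$ — is exactly this last absorption. The non-symmetric perturbation $\mathrm{E}_h$ is of size $Ch$ by (\ref{eq:symminusnonerror}), whereas the coercivity constant one must protect is $c\,\beta_1^{-1}$. I would verify that $h_0$ can be chosen so that $Ch\le\tfrac12\,c\,\beta_1^{-1}$ for all $h<h_0$, and, crucially, that this threshold is independent of the contrasts $\overline\alpha_i/\underline\alpha_i$ and of $H_i/h_i$. I expect this to be the main obstacle; it is handled exactly as in the proof of Theorem \ref{thm:main2}, where the same absorption is carried out against the weaker $(H_i/h_i)^{-2}$ bound. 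Since the improved $\beta_1$ here is \emph{smaller}, the coercivity constant $\beta_1^{-1}$ it protects is \emph{larger}, so the perturbation is in fact easier, not harder, to absorb, and no new difficulty arises beyond the bookkeeping already present in Theorem \ref{thm:main2}.
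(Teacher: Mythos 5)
Your proposal matches the paper's own argument: the paper proves this proposition by repeating the proof of Theorem~\ref{thm:main2} verbatim, with the only change being that the stable-splitting assumption is now supplied by Proposition~\ref{prop:main1} (hence Lemma~\ref{lem:stability2}) instead of Theorem~\ref{thm:main1}, exactly as you describe. Your additional remarks on absorbing the $O(h)$ non-symmetric perturbation and on the choice of $h_0$ are consistent with, and somewhat more explicit than, what the paper states.
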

\begin{proof}
 The proof is completely analogous to Theorem \ref{thm:main2}. The only difference is that the assumptions here have been proven in Lemma \ref{lem:conv} and Proposition \ref{prop:main1} instead of in Theorem \ref{thm:main1}.
\end{proof}

\section{Numerical results}
\label{sect:numres}
In this section we present some numerical results using the proposed method. All experiments are done for the Problem (\ref{eq:modelproblem}) on a unit square domain $\Omega=(0,1)^2$. The coefficient $\alpha$ is equal to $2+\sin(100\pi x)\sin(100\pi y)$ except for the areas marked with red where $\alpha$ equals $\alpha_1(2+\sin(100\pi x)\sin(100\pi y))$ and $\alpha_1$ is a parameter describing the discontinuities in the distribution of the coefficient. The right hand side is chosen to be $f=1$.

The numerical solution is obtained by solving the preconditioned system (\ref{eq:prcndsystem}) for $k$ equal 2 using the generalized minimal residual method (GMRES). We run the method until the $l_2$ norm of the residual is reduced by a factor $10^{6}$, i.e., when $\|r_i\|_2/\|r_0\|_2\leq 10^{-6}$, $r_i$ being the $i$-th residual.

In the first four examples we subdivide $\Omega$ into $4\times 4$ subdomains and test the method for various distributions of the coefficient $\alpha$. For example 1, we consider a distribution of $\alpha$ consisting of channels and inclusions in the interior of the subdomains, i.e. $\alpha$ has jumps only in the interior of subdomains (cf. Figure \ref{fig:alphaint}). For Example 2 and 4, we consider distributions where $\alpha$ has jumps along subdomain interfaces (cf. Figure \ref{fig:alphabnd} and \ref{fig:alphachannelboundary}) and therefore jumps also on the subdomain layers. In Example 3, we consider the case where $\alpha$ has jumps over substructures. For each of the examples above, the number of iterations until convergence for different values of $\alpha_1$, are shown in Table \ref{tbl:itnmb}.  

In Table \ref{tbl:asstab1} and \ref{tbl:asstab2} we report the iteration number for decreasing values of $H_i$ and $h_i$ for two test cases where the coefficient $\alpha$ is equal to $2+\sin(10\pi x)\sin(10\pi y)$ and $2+\sin(100\pi x)\sin(100\pi y)$, respectively. In the parentheses we report an estimate of the smallest eigenvalue of the symmetric part of the preconditioned operator $T^{(2)}_A$, i.e., the smallest eigenvalue of $\frac{1}{2}\left({T_A^{(2)}}^t+T_A^{(2)}\right)$, which corresponds to the parameter $c_p$ in Theorem~\ref{thm:gmres}. 

In Table~\ref{tbl:asstabNS}, we report the iteration number  and estimate of the smallest eigenvalue for the symmetric part of the non-symmetric preconditioner for decreasing values of $H_i$ and $h_i$, i.e., for $k$ equal $3$. The distribution of $\alpha$ is here the same as for the problem in Table~\ref{tbl:asstab2}. 

We do not report any estimates of the parameter $C_p$ of Theorem~\ref{thm:gmres}, which is defined as the square root of the largest eigenvalue of the normal matrix, ${T_A^{(2)}}^tT_A^{(2)}$, since both our convergence analysis and numerical results show that this is a constant independent of the coefficient $\alpha$ and the mesh parameters.

The magnitude of the non-symmetry and non-normality of the CRFVE stiffness matrix A with respect to $\alpha_1$  are shown in Table \ref{tbl:nonsymmetry} and the distributions of the eigenvalues of stiffness matrix A and the corresponding preconditioned operator, $T_A^{(2)}$, are shown in Figure \ref{fig:eigenvaluesofA} and \ref{fig:eigenvaluesofprecondsystem}, respectively. The difference between the finite element and the finite volume element stiffness matrices measured in the matrix 2-norm is shown in Table~\ref{tbl:errorfvefe}, for three different distributions of the coefficient $\alpha$.

\noindent
\begin{figure}[htb]
\centering
\begin{subfigure}[t]{0.48\linewidth}
        \centering
         \includegraphics[trim=145 60 145 60, clip,width=\linewidth]{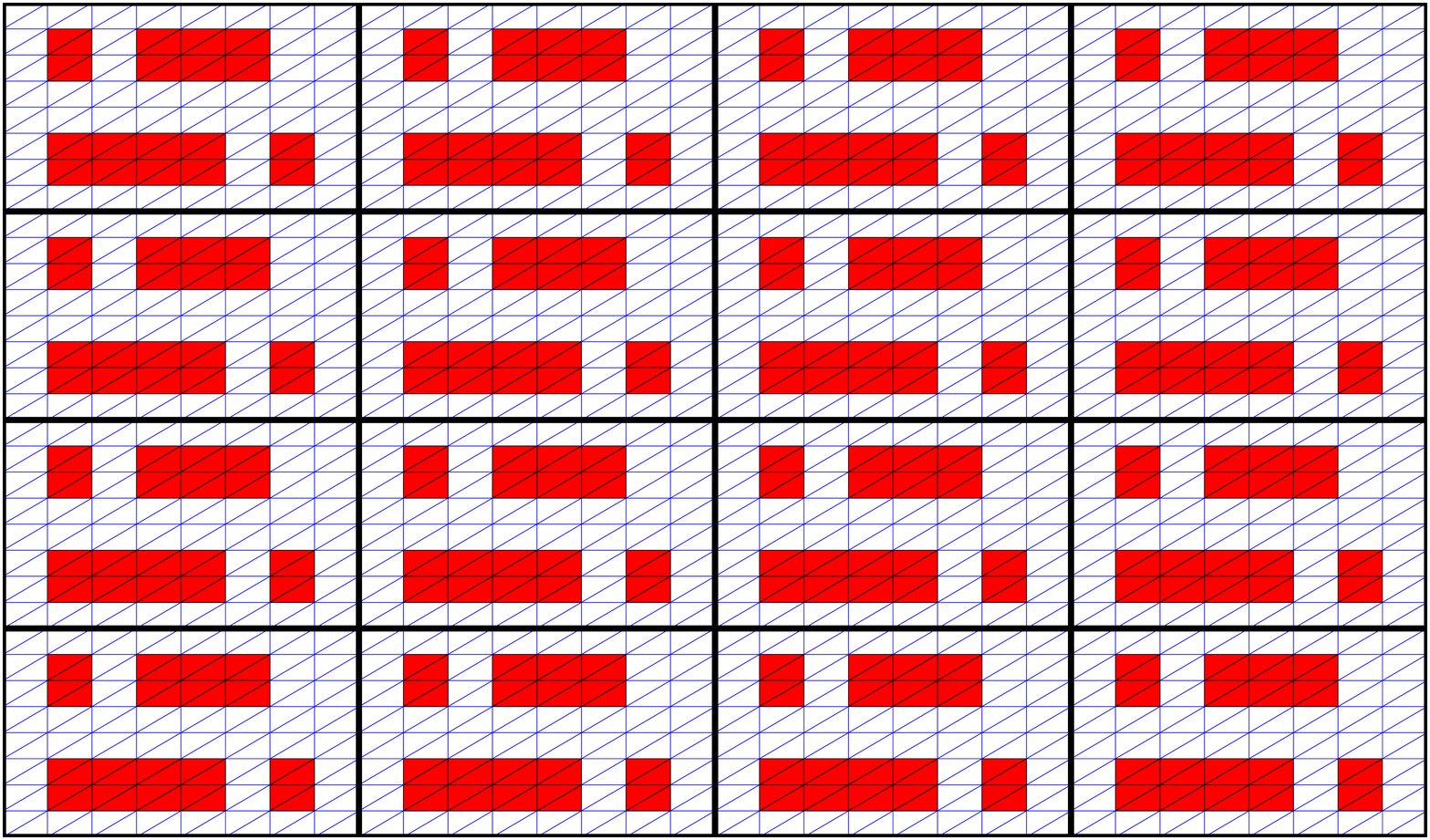}
  \caption{Example 1.}
 \label{fig:alphaint}

\end{subfigure}
\hspace{0.1cm}
\begin{subfigure}[t]{0.48\linewidth}
        \centering
 \includegraphics[trim=145 60 145 60, clip,width=\linewidth]{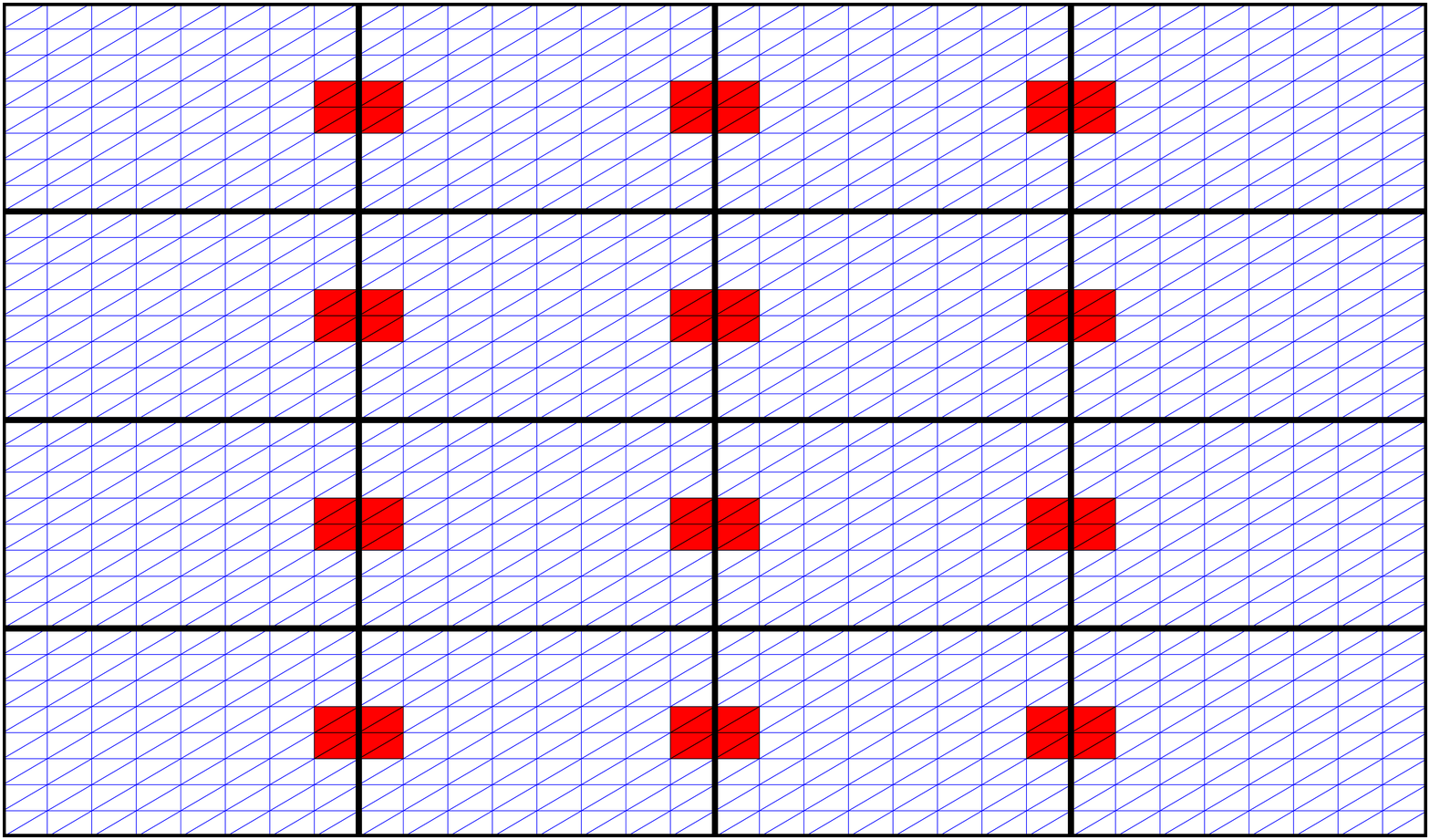}
 \caption{Example 2. }
 \label{fig:alphabnd}
\end{subfigure}
\caption{Two geometries with $32\times32$ fine mesh and $4\times4$ coarse mesh showing the distribution of $\alpha$. The regions marked with red are where $\alpha_1$ has a large value.}
\end{figure}
\begin{figure}[htb]
        \centering
\begin{subfigure}[t]{0.48\linewidth}
         \includegraphics[trim=145 60 145 60, clip,width=\linewidth]{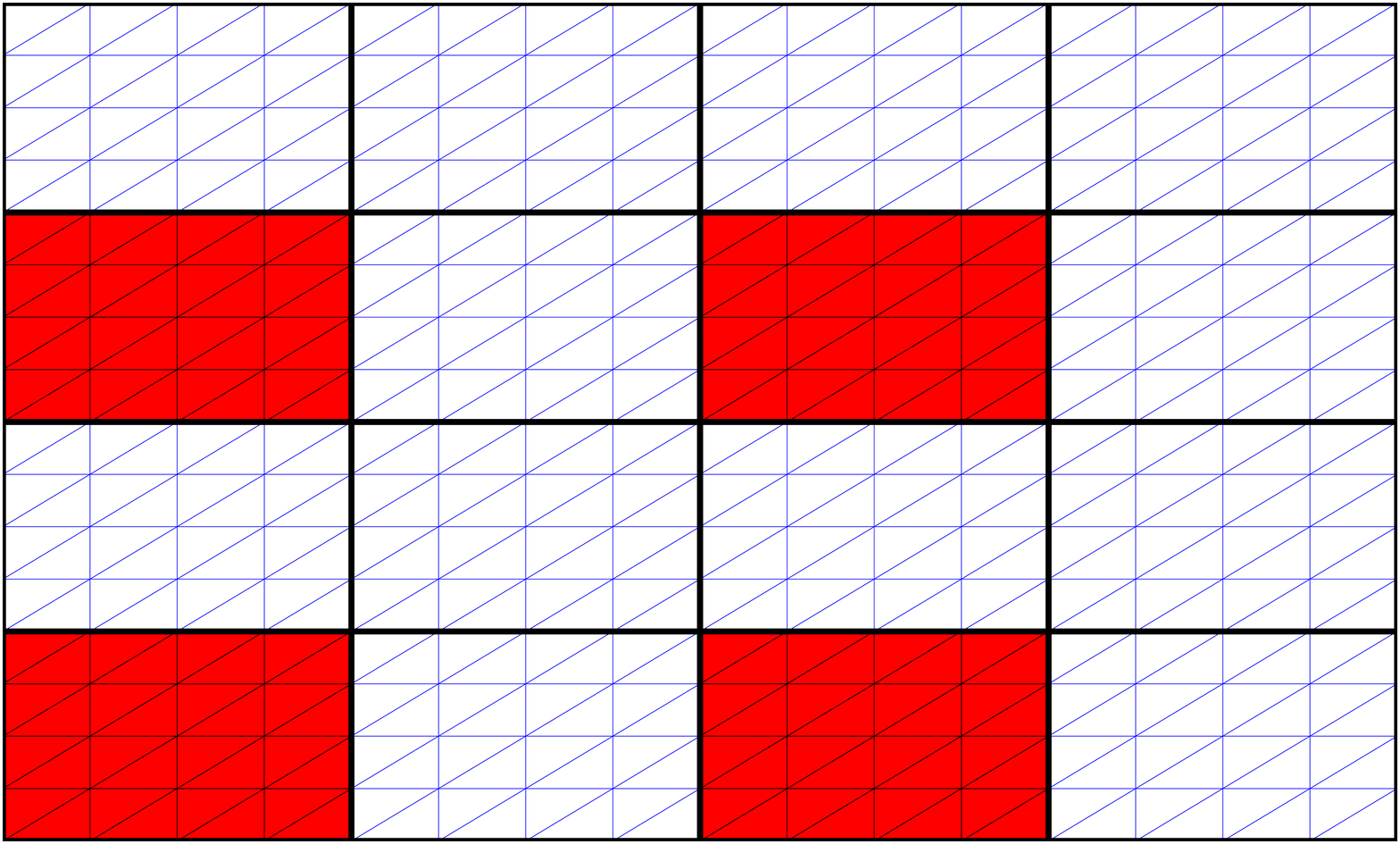}
  \caption{Example 3.}
 \label{fig:alphajumpsub}
 \end{subfigure}
 \begin{subfigure}[t]{0.48\linewidth}
         \includegraphics[trim=145 60 145 60, clip,width=\linewidth]{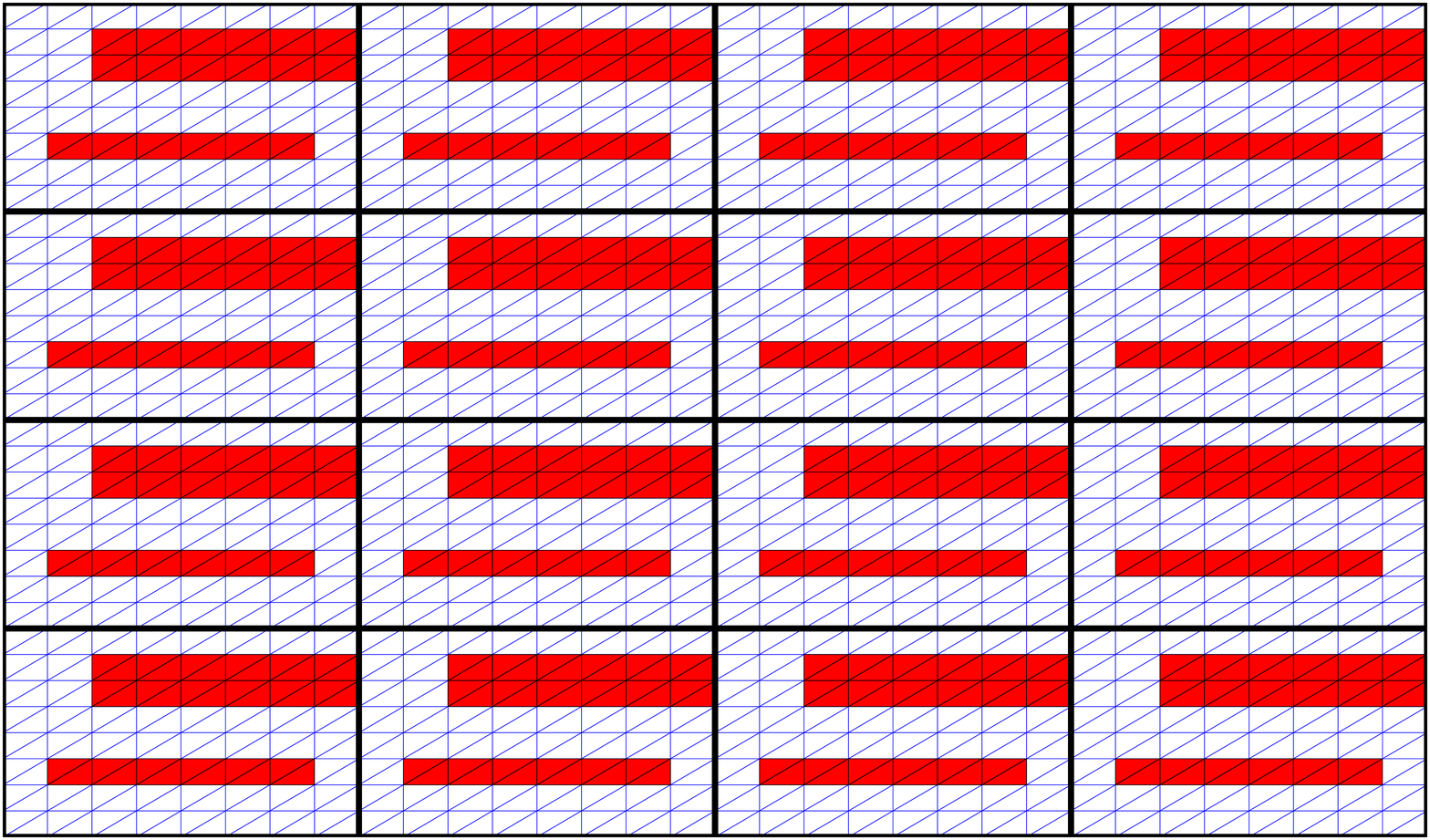}
  \caption{Example 4.}
 \label{fig:alphachannelboundary}
 \end{subfigure}
\caption{(A) Geometry with $16\times16$ fine mesh and $4\times4$ coarse mesh showing the distribution of $\alpha$ for the third example. (B) Geometry with $32\times32$ fine mesh and $4\times4$ coarse mesh showing the distribution of $\alpha$ for the fourth example. The regions marked with red are where the coefficient has jumps.}
 \end{figure}
\begin{table}[htb]
\centering
\begin{tabular}{|l c c |}
\hline
$\alpha_1$ &$\|A-A^t\|_2$&$\|AA^t-A^tA\|_2$\\
\hline
  $10^0$&4.0e-1&6.48e0\\
  $10^1$&3.96e0&6.03e2\\
  $10^2$&3.96e1&6.06e4\\
  $10^3$&3.96e2&6.06e6\\
  $10^4$&3.96e3&6.06e8\\
  $10^5$&3.96e4&6.06e10\\
  $10^6$&3.96e5&6.06e12\\
\hline
\end{tabular}
\vspace{5mm}
\caption{2-norm measures of the non-symmetry and non-normality of the CRFVE stiffness matrix $A$ with the distribution of $\alpha$ given in Example 4.} 
\label{tbl:nonsymmetry}
\end{table}
\begin{table}[htb]
\centering
\begin{tabular}{|l c c c |}
\hline
&$\|A^{FE}-A^{FVE}\|_2$&$\|A^{FE}-A^{FVE}\|_2$&$\|A^{FE}-A^{FVE}\|_2$\\
\hline
$h$&$\alpha=2+\sin(100\pi x)\sin(100\pi y)$&$\alpha=2+\sin(10\pi x)\sin(10\pi y)$&$\alpha=2+\sin(\pi x)\sin(\pi y)$\\
\hline
1/8& 7.16e-1&4.10e0&5.35e-1\\
1/16& 1.02e-1&2.31e0&2.82e-1\\
1/32&1.52e0&1.16e0&1.44e-1\\
1/64&4.05e0&6.52e-1&7.28e-2\\
1/128&3.16e0&3.47e-1&3.65e-2\\
1/256&1.41e0&1.79e-1&1.84e-2\\
1/512&7.91e-1& 9.09e-2&9.22e-3\\

\hline

\end{tabular}
\vspace{5mm}
\caption{2-norm measures of the difference between the finite element and the finite volume element stiffness matrix for decreasing $h$ for three different distributions of $\alpha$.} 
\label{tbl:errorfvefe}
\end{table}
 
\begin{table}[htb]
\centering
\begin{tabular}{|l l l l l|}
\hline
&&Average ASM&&\\
\hline
&Example 1:&Example 2:&Example 3:&Example 4:\\
\hline
$\alpha_1$ &$\sharp$ iter.&$\sharp$ iter.&$\sharp$ iter.&$\sharp$ iter.\\
\hline
  $10^0$&40&40&31&40\\
  $10^1$&38&66&32&52\\
  $10^2$&37&108&36&92\\
  $10^3$&37&177&36&140\\
  $10^4$&37&233&38&178\\
  $10^5$&37&276&39&214\\
  $10^6$&37&316&39&249\\
\hline
\end{tabular}
\vspace{5mm}
\caption{Number of iterations until convergence for the solution of (\ref{eq:modelproblem}) with different values of $\alpha_1$ in the distributions of the coefficient $\alpha$ given in Figures \ref{fig:alphaint},  \ref{fig:alphabnd}, \ref{fig:alphajumpsub}, \ref{fig:alphachannelboundary}.}
\label{tbl:itnmb}
\end{table}

\begin{figure}
        \centering
         \includegraphics[width=\linewidth]{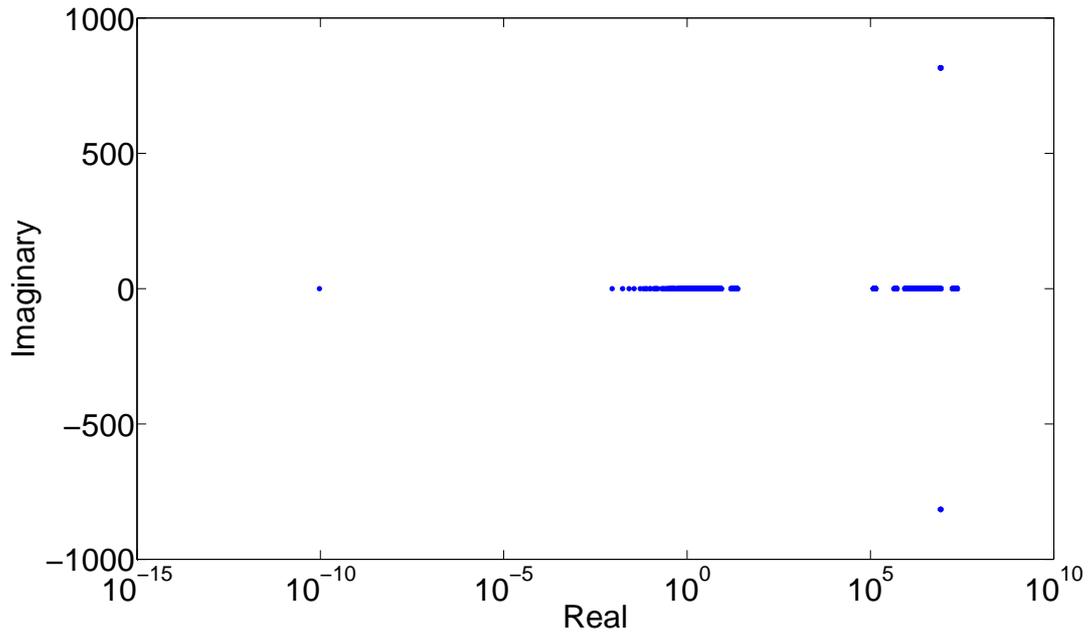}
  \caption{Eigenspectrum of the CRFVE stiffness matrix A for the distribution of $\alpha$ given in Example 4 with $\alpha_1=1e6$.}
 \label{fig:eigenvaluesofA}
 \end{figure}
 \begin{figure}
         \includegraphics[width=\linewidth]{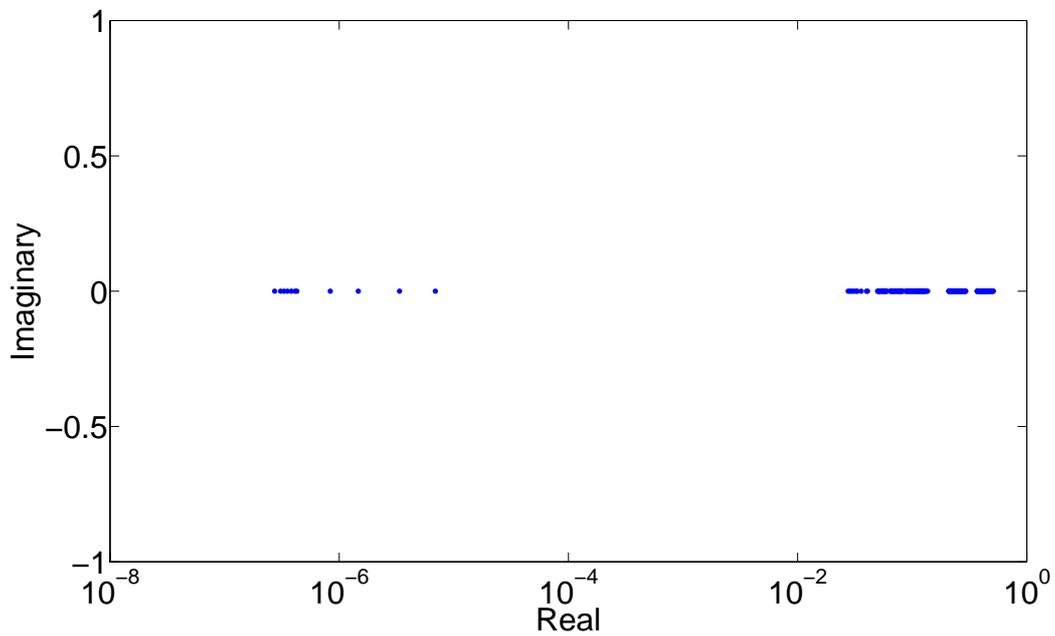}
  \caption{Eigenspectrum of the preconditioned operator, $T_A^{(2)}$, for the distribution of $\alpha$ given in Example 4 with $\alpha_1=1e6$.}
 \label{fig:eigenvaluesofprecondsystem}
 \end{figure}
 
\begin{table}[htb]
\centering
\begin{tabular}{|l| l l l l l l |}
\hline
$h/H$ &$1/4$&$1/8$&$1/16$&$1/32$&$1/64$&$1/128$\\
\hline
  $1/8$&22(1.89e-1)&&&&&\\
  $1/16$&32(8.80e-2)&25(1.67e-1)&&&&\\
  $1/32$&44(4.22e-2)&37(7.74e-2)&24(1.79e-1)&&&\\
  $1/64$&63(2.08e-2)&52(3.78e-2)&35(8.60e-2)&23(1.82e-1)&&\\
  $1/128$&89(1.03e-2)&74(1.87e-2)&49(4.21e-2)&33(8.95e-2)&21(1.83e-1)&\\
  $1/256$&126(5.12e-3)&106(9.30e-2)&69(2.09e-2)&46(4.42e-2)&29(9.05e-2)&18(1.83e-1)\\
\hline
\end{tabular}
\vspace{5mm}
\caption{Number of iterations  for the symmetric preconditioner for decreasing values of $h$ and $H$ with $\alpha=2+\sin(10\pi x)\sin(10\pi y)$. Estimates of the smallest eigenvalue of the symmetric part of the preconditioned system are reported in the parentheses.} 
\label{tbl:asstab1}
\end{table}
\begin{table}[htb]
\centering
\begin{tabular}{|l| l l l l l l |}
\hline
$h/H$ &$1/4$&$1/8$&$1/16$&$1/32$&$1/64$&$1/128$\\
\hline
  $1/8$&20(1.90e-1)&&&&&\\
  $1/16$&30(9.24e-2)&24(1.79e-1)&&&&\\
  $1/32$&40(4.54e-2)&33(9.01e-2)&24(1.81e-1)&&&\\
  $1/64$&59(2.24e-2)&47(4.45e-2)&35(8.76e-2)&26(1.80e-1)&&\\
  $1/128$&83(1.11e-2)&68(2.19e-2)&49(4.37e-2)&39(8.76e-2)&28(1.70e-1)&\\
  $1/256$&116(5.50e-3)&95(1.09e-2)&68(2.16e-2)&55(4.29e-2)&41(8.21e-2)&27(1.78e-1)\\
\hline
\end{tabular}
\vspace{5mm}
\caption{Number of iterations  for the symmetric preconditioner for decreasing values of $h$ and $H$ with $\alpha=2+\sin(100\pi x)\sin(100\pi y)$. Estimates of the smallest eigenvalue of the symmetric part of the preconditioned system are reported in the parentheses.} 
\label{tbl:asstab2}
\end{table}
\begin{table}[htb]
\centering
\begin{tabular}{|l| l l l l l l |}
\hline
$h/H$ &$1/4$&$1/8$&$1/16$&$1/32$&$1/64$&$1/128$\\
\hline
  $1/8$&19(1.91e-1)&&&&&\\
  $1/16$&27(9.23e-2)&22(1.83e-1)&&&&\\
  $1/32$&35(4.54e-2)&32(8.94e-2)&23(1.79e-1)&&&\\
  $1/64$&52(2.24e-2)&46(4.40e-2)&35(8.45e-2)&25(1.79e-1)&&\\
  $1/128$&75(1.11e-2)&62(2.20e-2)&46(4.38e-2)&37(8.73e-2)&28(1.70e-1)&\\
  $1/256$&107(5.50e-3)&89(1.10e-2)&64(2.16e-2)&53(4.28e-2)&40(8.23e-2)&26(1.77e-1)\\
\hline
\end{tabular}
\vspace{5mm}
\caption{Number of iterations  for the non-symmetric preconditioner for decreasing values of $h$ and $H$ with $\alpha=2+\sin(100\pi x)\sin(100\pi y)$. Estimates of the smallest eigenvalue of the symmetric part of the preconditioned system are reported in the parentheses.} 
\label{tbl:asstabNS}
\end{table}
The iteration numbers in Table \ref{tbl:itnmb} supports our theoretical results developed in Section \ref{sect:analysis}. We see no dependency on the contrast in $\alpha$ when the jumps are in the interior of subdomains, cf. Figure \ref{fig:alphaint}. If the coefficient has jumps in the subdomain layer, $\Omega_i^\delta$ corresponding to $\Omega_i$, the method is dependent on the ratio $\frac{\overline\alpha_i}{\underline{\alpha}_i}$, i.e., the ratio of the largest and smallest value of $\alpha$ in the layer, cf. Figure \ref{fig:alphabnd} and \ref{fig:alphachannelboundary}. When the jumps are only across the substructures, as in Figure \ref{fig:alphajumpsub}, the iteration numbers show that the method is robust with respect to the discontinuities in $\alpha$. 

The numerical results also show that the proposed method is asymptotically stable and scalable with respect to the dependence on $\frac{H_i}{h_i}$, and depends linearly on $\frac{H_i}{h_i}$ for the test cases under consideration as shown in Table~\ref{tbl:asstab1} and \ref{tbl:asstab2}. The coefficient $\alpha$ is here equal to $2+\sin(10\pi x)\sin(10\pi y)$ and $2+\sin(100\pi x)\sin(100\pi y)$, respectively. By comparing Table~\ref{tbl:asstab2} and \ref{tbl:asstabNS}, we see that the difference in behavior of the symmetric and non-symmetric preconditioner is negligible.

The distribution of the eigenvalues of the stiffness matrix $A$, as depicted in Figure~{\ref{fig:eigenvaluesofA}, include several complex eigenvalues with the magnitude of their complex part being close to zero, and two eigenvalues with multiplicity eight with a clearly visible complex part in the figure. The eigenvalues of the preconditioned operator, as depicted in Figure~\ref{fig:eigenvaluesofprecondsystem}, are all real and positive. Numerical testing have also shown that for the test cases where our theory predicts dependency on the coefficient jump in $\alpha$, the smallest eigenvalues of the symmetric part of the preconditioned operator, $T_A^{(2)}$, are inversely proportional to the ratio $\frac{\overline\alpha_i}{\underline{\alpha}_i}$.

In Figure \ref{fig:relresplot1}--\ref{fig:relresplot4} we have plotted the relative residuals and the relative preconditioned residuals measured in the $l_2$ norm. These plots show that if the stopping criteria is based on the preconditioned residual the method will in the worst case converge to the prescribed tolerance even though the resulting GMRES solution of the linear system is far from the exact solution. Hence, using a stopping criteria based on the $l_2$ norm of the residual instead of the more commonly used $l_2$ norm of the preconditioned residual is in our case a much more viable choice. 

Finally, we conclude this section by stating that the numerical results presented here confirm the theory developed in the previous sections regarding the non-symmetry of the finite volume element stiffness matrix, the estimate for the convergence rate of the GMRES method applied to our preconditioned systems (\ref{eq:prcndsystem}) for $k=2,3$ and the convergence estimate for the difference between the FE and the FVE bilinear form, cf. Equation (\ref{eq:difffefve}).
\begin{figure}[htb]
\centering
\begin{subfigure}[t]{0.48\linewidth}
        \centering
         \includegraphics[trim=46 10 115 20, clip,width=\linewidth]{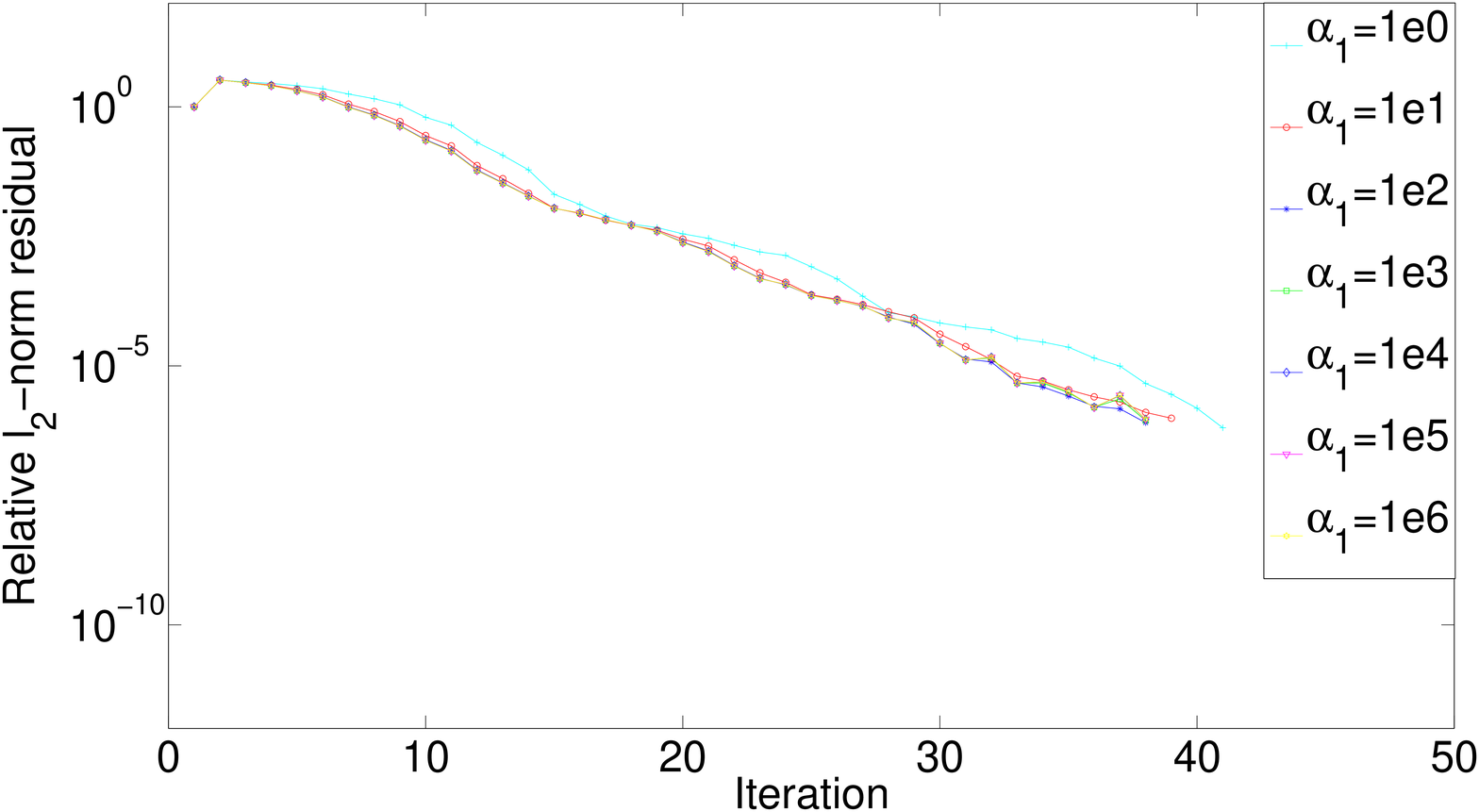}
  \caption{Example 1. Relative residual norms for GMRES minimizing the $a$-norm for different $\alpha_1$.}
 \label{fig:resplotchannelinclusioninside1}

\end{subfigure}
\hspace{0.1cm}
\begin{subfigure}[t]{0.48\linewidth}
        \centering
 \includegraphics[trim=46 10 115 20, clip,width=\linewidth]{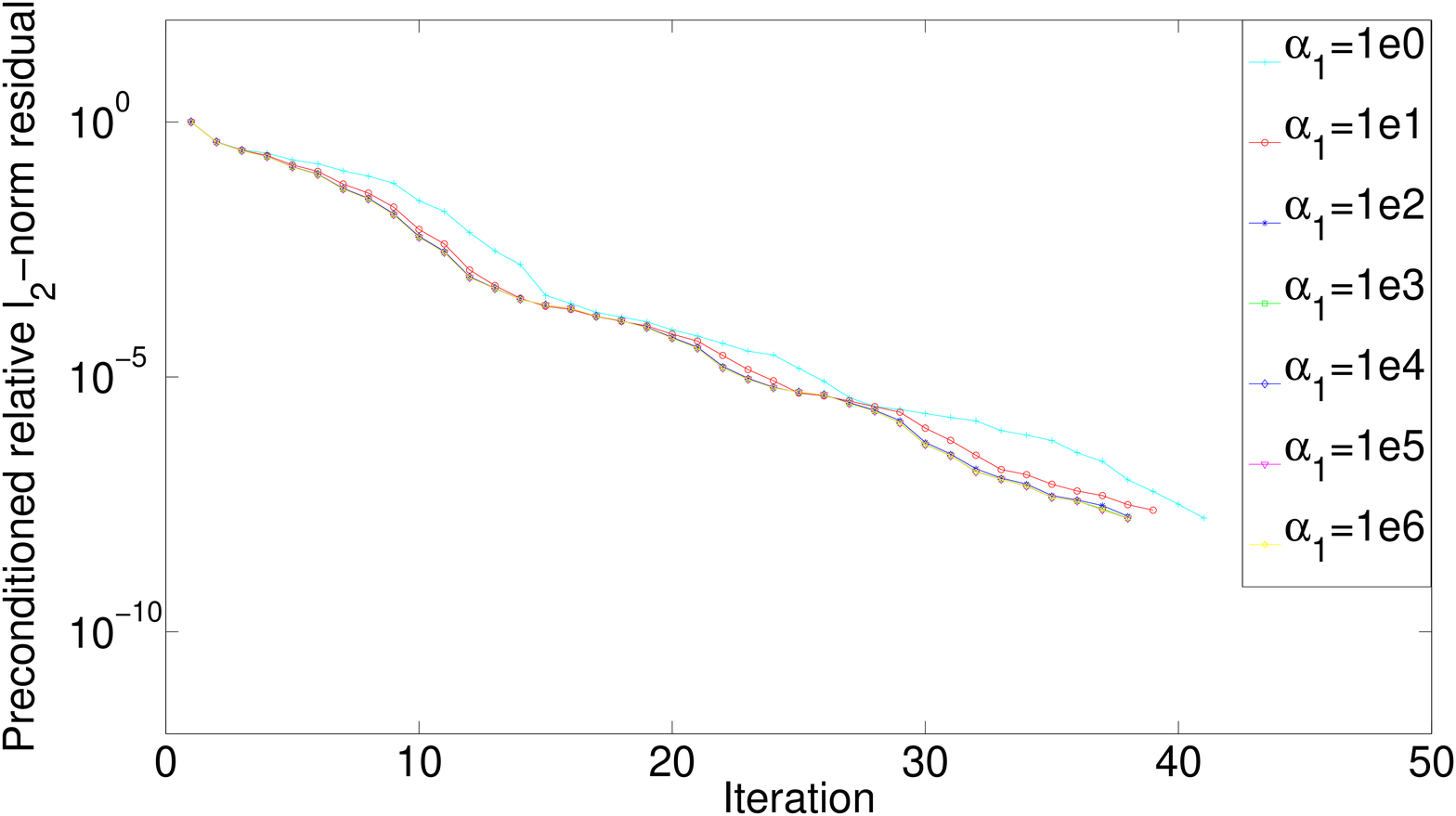}
 \caption{Example 1. Relative preconditioned residual norms for GMRES minimizing the $a$-norm for different $\alpha_1$.}
 \label{fig:resplotchannelinclusioninside2}
\end{subfigure}
\noindent
\centering
\caption{}
\label{fig:relresplot1}
\end{figure}

\begin{figure}[htb]
\begin{subfigure}[t]{0.48\linewidth}
        \centering
         \includegraphics[trim=46 10 115 20, clip,width=\linewidth]{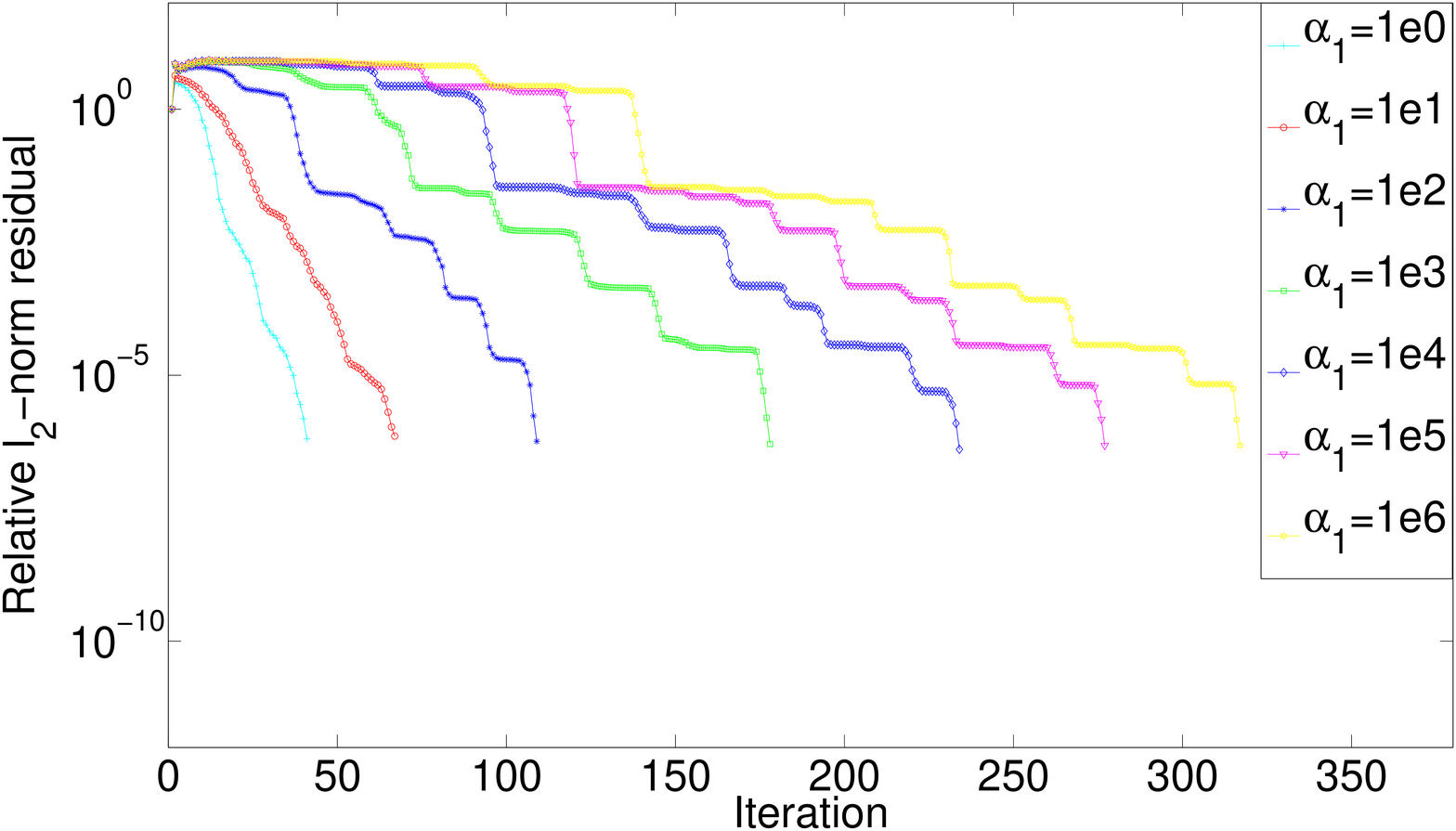}
  \caption{Example 2. Relative residual norms for GMRES minimizing the $a$-norm for different $\alpha_1$.}
 \label{fig:resplotalphabnd1}

\end{subfigure}
\hspace{0.1cm}
\begin{subfigure}[t]{0.48\linewidth}
        \centering
 \includegraphics[trim=46 10 115 20, clip,width=\linewidth]{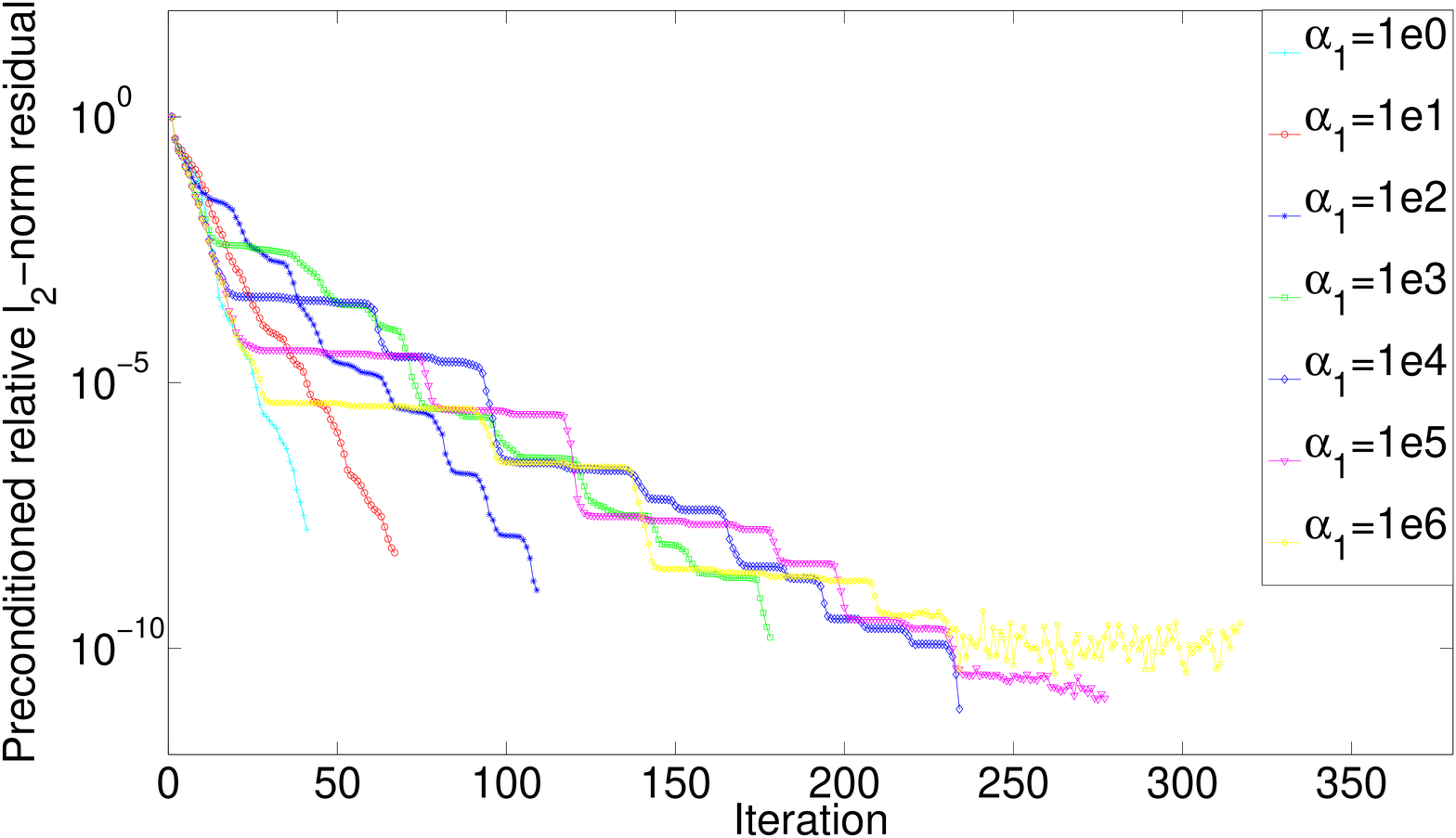}
 \caption{Example 2. Relative preconditioned residual norms for GMRES minimizing the $a$-norm for different $\alpha_1$.}
 \label{fig:resplotalphabnd2}

\end{subfigure}
\caption{}
\label{fig:relresplot2}
\end{figure}

\begin{figure}[htb]
\noindent

\centering
\begin{subfigure}[t]{0.48\linewidth}
        \centering
         \includegraphics[trim=46 10 115 20, clip,width=\linewidth]{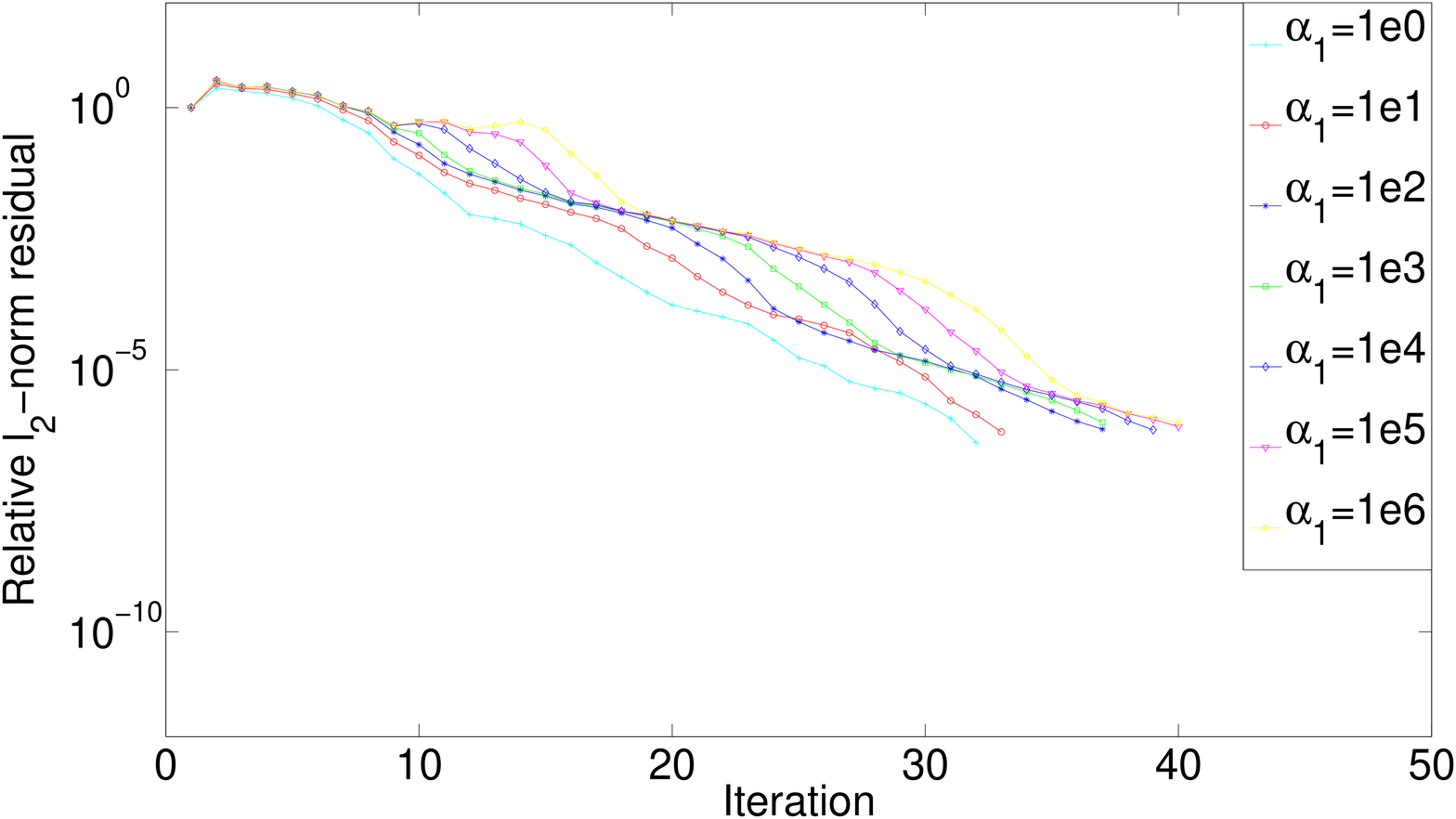}
  \caption{Example 3. Relative residual norms for GMRES minimizing the $a$-norm for different $\alpha_1$.}
 \label{fig:resplotalphabnd3}

\end{subfigure}
\hspace{0.1cm}
\begin{subfigure}[t]{0.48\linewidth}
        \centering
 \includegraphics[trim=46 10 115 20, clip,width=\linewidth]{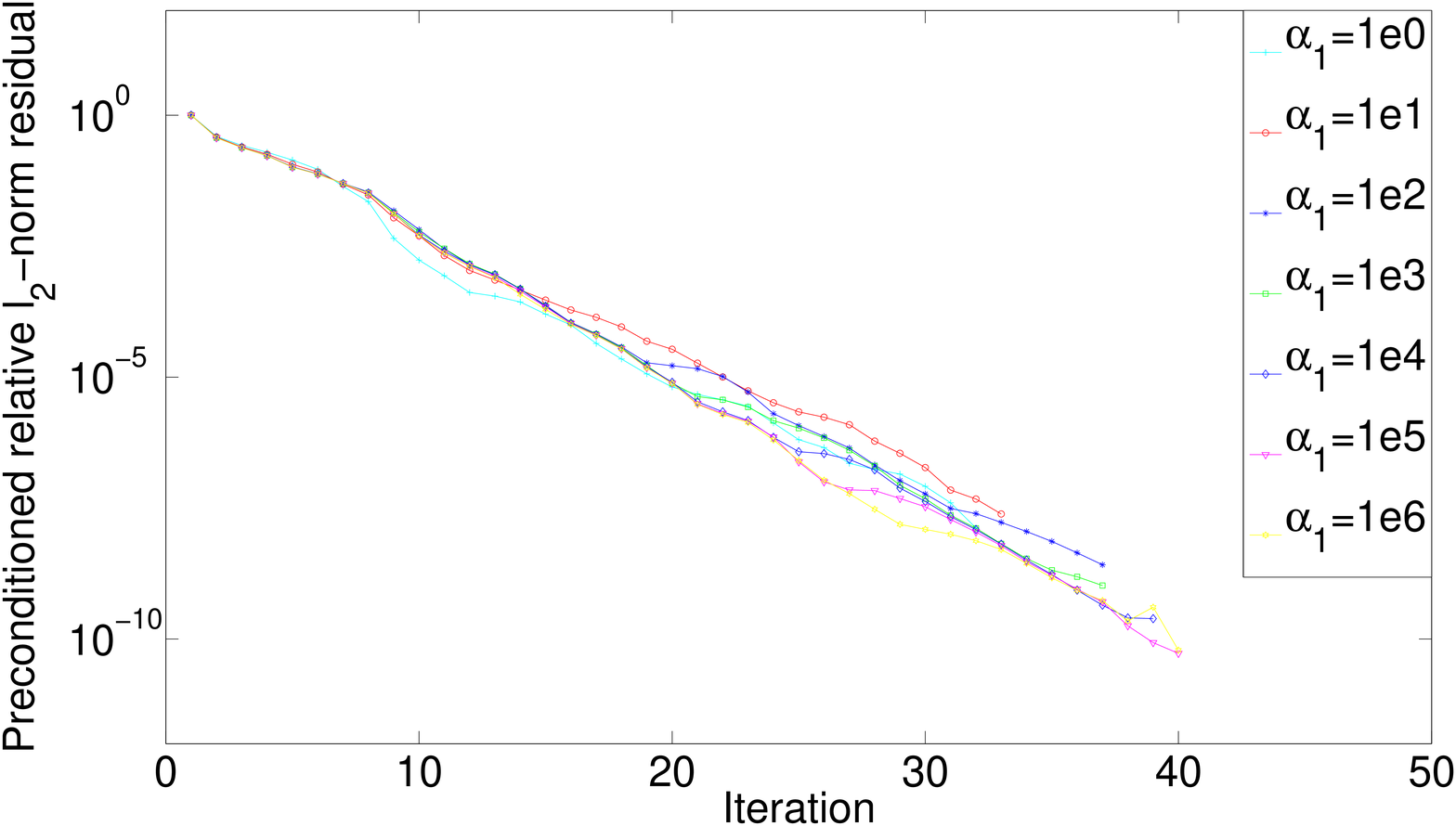}
 \caption{Example 3. Relative preconditioned residual norms for GMRES minimizing the $a$-norm for different $\alpha_1$.}
 \label{fig:resplotalphabnd4}
\end{subfigure}
\caption{}
\label{fig:relresplot3}
\end{figure}

\begin{figure}[htb]

\begin{subfigure}[t]{0.48\linewidth}
        \centering
         \includegraphics[trim=46 10 115 20, clip,width=\linewidth]{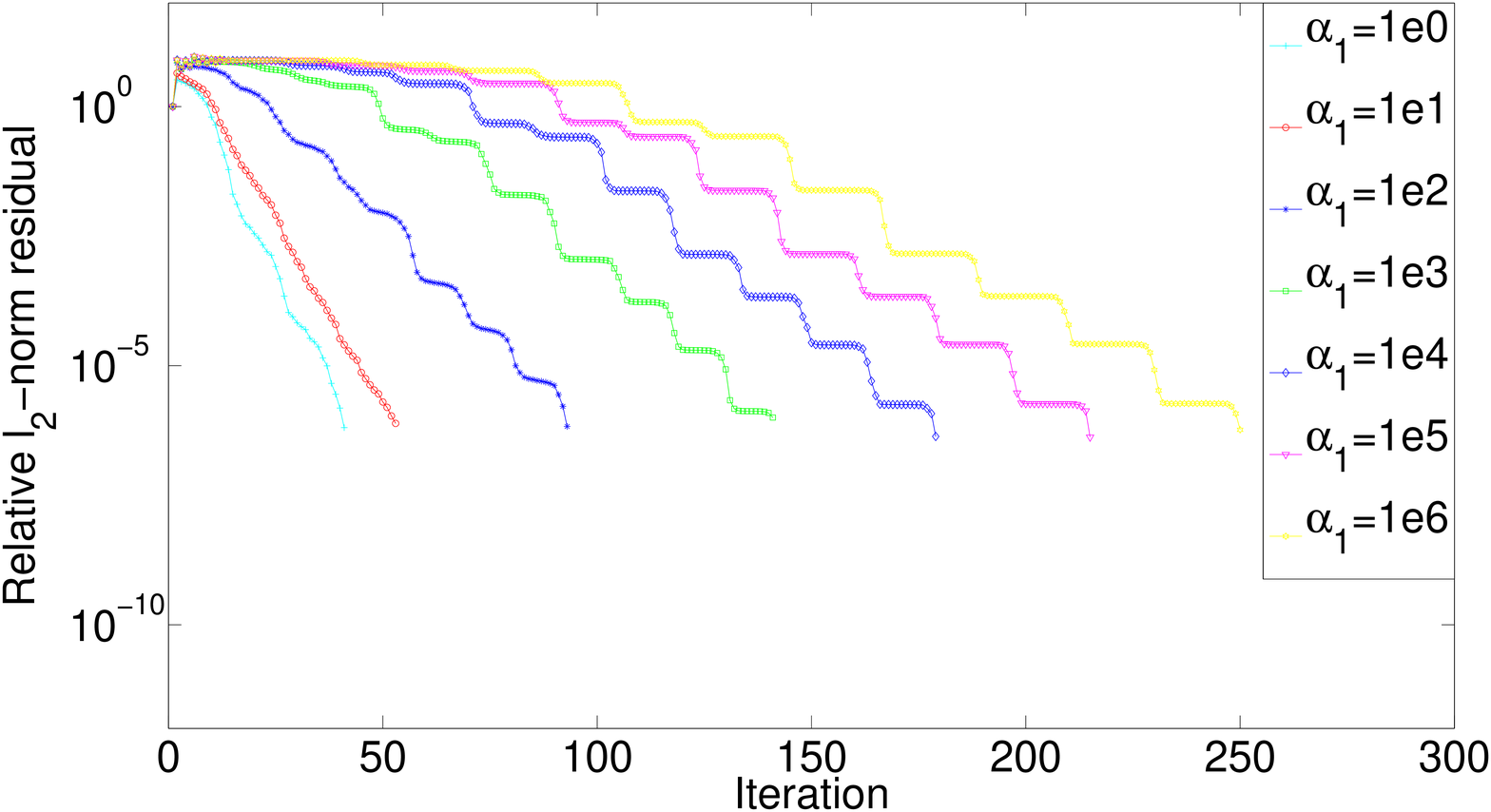}
  \caption{Example 4. Relative residual norms for GMRES minimizing the $a$-norm for different $\alpha_1$.}
 \label{fig:resplotalphachannelboundary1}

\end{subfigure}
\hspace{0.1cm}
\begin{subfigure}[t]{0.48\linewidth}
        \centering
 \includegraphics[trim=46 10 115 20, clip,width=\linewidth]{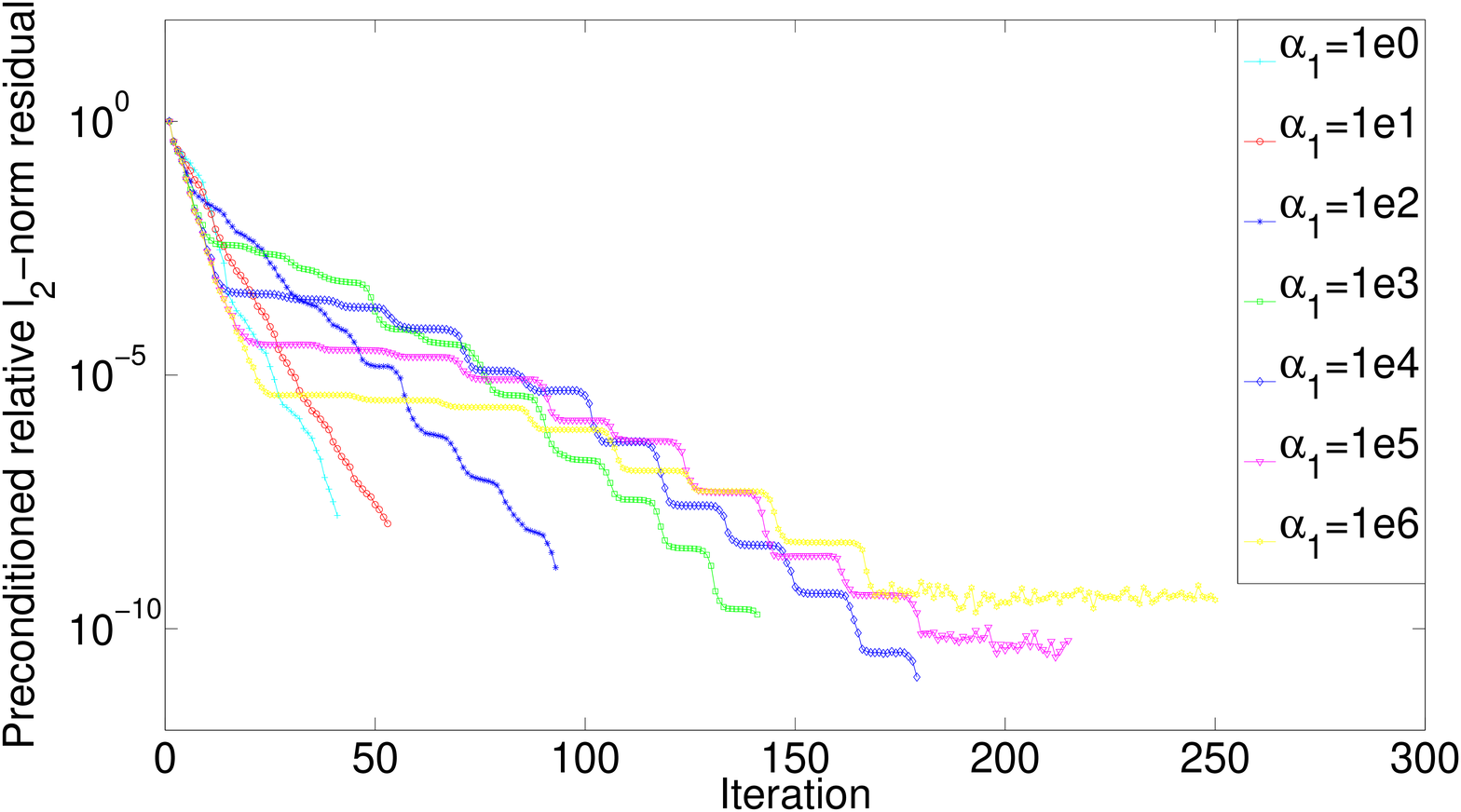}
 \caption{Example 4. Relative preconditioned residual norms for GMRES minimizing the $a$-norm for different $\alpha_1$.}
 \label{fig:resplotalphachannelboundary2}
\end{subfigure}
\caption{}
\label{fig:relresplot4}

\end{figure}
\clearpage

 \bibliography{CRFVEAddAvg-LMR}
 \bibliographystyle{plain}
\end{document}